\newcommand\R{\mathbb{R}}
\newcommand\Z{\mathbb{Z}}
\newcommand\N{\mathbb{N}}
\newcommand\A{\bf A}
\newcommand\CC{\mathcal{C}}
\newcommand\LL{\mathcal{L}}
\numberwithin{equation}{section}
\newtheorem{proposition}{Proposition}[section]
\newtheorem{lemma}{Lemma}[section]
\newtheorem{theorem}{Theorem}[section]
\newtheorem{remark}{Remark}[section]
\newcommand{\conR}{\mathsf{R}_{\mathsf{Conj}}}
\newcommand{\CS}{Y}
\begin{document}
\title[Strichartz estimates]{Strichartz estimates for the Schr\"odinger equation \\ in high dimensional critical electromagnetic fields }

\author{Qiuye Jia}
\address{Department of Mathematics, the Australian National University; }
\email{Qiuye.Jia@anu.edu.au; }

\author{Junyong Zhang}
\address{Department of Mathematics, Beijing Institute of Technology, Beijing 100081; }
\email{zhang\_junyong@bit.edu.cn; }

\begin{abstract}
We prove Strichartz estimates for the Schr\"odinger equation with scaling-critical electromagnetic potentials in dimensions $n\geq3$.
The decay assumption on the magnetic potentials is critical, including the case of the Coulomb potential. 
Our approach introduces novel techniques, notably the construction of Schwartz kernels for the localized Schr\"odinger propagator, which separates the antipodal points of $\mathbb{S}^{n-1}$,
 in these scaling critical electromagnetic fields. This method enables us to prove the $L^1(\R^n)\to L^\infty(\R^n)$ for the localized Schrödinger propagator, as well as global Strichartz estimates. Our results provide a positive answer to the open problem posed in \cite{DFVV, Fanelli, FV}, and fill a longstanding gap left by \cite{EGS1, EGS2}.
\end{abstract}

\maketitle

\begin{center}
 \begin{minipage}{120mm}
   { \small {\bf Key Words:  Decay estimates,  Strichartz estimates,  Singular electromagnetic potentials, Schr\"odinger equation}
      {}
   }\\
    { \small {\bf AMS Classification:}
      { 42B37, 35Q40, 35Q41.}
      }
 \end{minipage}
 \end{center}


\tableofcontents

\section{Introduction and main results}

\subsection{The setting and motivation}

We consider the Hamiltonian of a nonrelativistic charged particle in an electromagnetic field, given by the operator
\begin{equation}\label{H-AV}
H_{A, V}=(i\nabla+A(x))^2+V(x),
\end{equation}
where the electric scalar potential $V: \R^n\to \R$ and the magnetic vector potential
\begin{equation}
A(x)=(A^1(x),\ldots, A^n(x)): \, \R^n\to \R^n.
\end{equation}
The magnetic potential $A(x)$ characterizes the interaction of a free particle with an external magnetic field.
The Schr\"odinger operators with electromagnetic potentials have been extensively studied from the perspectives of spectral and scattering theory. 
Various important physical potentials, such as the constant magnetic field and the Coulomb potential, were explored by Avron, Herbst, and Simon \cite{AHS1, AHS2, AHS3} and Reed and Simon \cite{RS}.
This paper, building on recent studies \cite{DFVV, EGS1, EGS2, FFFP, FFFP1, GYZZ, FV, FZZ}, aims to investigate the role of electric and magnetic potentials in the short- and long-time behavior of solutions to dispersive equations, including the Schrödinger, wave, and Klein-Gordon equations. \vspace{0.2cm}

The study of decay estimates and Strichartz estimates for dispersive equations has a long history, due to their central importance in both analysis and the theory of partial differential equations (PDEs).
We refer to \cite{BPSS, BPST, BG, CS,DF, DFVV, EGS1, EGS2, S} and the references therein for those estimates for the classical and important Schr\"odinger and wave equations with electromagnetic potentials in mathematical and physical fields. However, since different potentials have different effects, it is hard to provide a universal argument to treat potentials of different types. Consequently, the picture of the program is far from being complete, particularly in the case of critical physical potentials. In this direction, there is a substantial body of literature studying the decay behavior of dispersive equations under perturbations by various potentials. Even for subcritical magnetic potentials, several papers (see \cite{CS, DF, DFVV, EGS1, EGS2, S} and the references therein) have addressed time-decay and Strichartz estimates. For the scaling-critical purely inverse-square electric potential, pioneering results were established by Burq, Planchon, Stalker, and Tahvildar-Zadeh \cite{BPSS, BPST}, in which they established Strichartz estimates for the Schr\"odinger and wave equations, in space dimension $n\geq2$. When a magnetic field is present, the situation becomes more complicated. This is because the scaling-critical magnetic potential induces a long-range perturbation. To focus our discussion, we consider the scaling-critical electromagnetic Schrödinger operator
 \begin{equation}\label{LAa}
\mathcal{L}_{{\A},a}=\Big(i\nabla+\frac{{\A}(\hat{x})}{|x|}\Big)^2+\frac{a(\hat{x})}{|x|^2},\qquad x\in \R^n\setminus\{0\}
\end{equation}
where $\hat{x}=\tfrac{x}{|x|}\in\mathbb{S}^{n-1}$, $a\in W^{1,\infty}(\mathbb{S}^{n-1},\mathbb{R})$ and ${\A}\in W^{1,\infty}(\mathbb{S}^{n-1};\R^n)$  satisfies the transversal gauge condition (known as the Cr\"onstrom gauge)
\begin{equation}\label{eq:transversal}
{\A}(\hat{x})\cdot\hat{x}=0,
\qquad
\text{for all }\hat{x}\in\mathbb{S}^{n-1}.
\end{equation}
In \cite{FFFP, FFFP1}, Fanelli, Felli, Fontelos, and Primo studied the time-decay estimate for the Schr\"odinger equation associated with the operator \eqref{LAa} when $n=2$ and $n=3$. 
More precisely,  they \cite{FFFP1} mainly considered 2D the {\it Aharonov-Bohm} potential
\begin{equation}\label{ab-potential}
a\equiv0,
\qquad
{\A}(\hat{x})=\alpha\Big(-\frac{x_2}{|x|},\frac{x_1}{|x|}\Big),\quad \alpha\in\R,
\end{equation}
and the 3D {\it inverse-square} potential
\begin{equation}\label{eq:inversesquare}
{\A}\equiv0,
\qquad
a(\hat{x})\equiv a>-1/4.
\end{equation}
In \cite{FFFP}, they focused on the two dimension model and proved the time-decay estimate for the Schr\"odinger equation
\begin{equation}\label{eq:decayshro}
\|e^{it\LL_{{\A},a}}\|_{L^1(\R^2)\to L^\infty(\R^2)}\lesssim |t|^{-1}
\end{equation}
 provided that
\begin{equation}\label{equ:condassa}
  \|a_-\|_{L^\infty(\mathbb{S}^1)}<\min_{k\in\Z}\{|k-\Phi_{\A}|\}^2,
  \qquad
  \Phi_{\A}\notin\Z,
\end{equation}
where $a_-:=\max\{0,-a\}$ is the negative part of $a$, and $\Phi_{\A}$ is the total flux along the sphere
\begin{equation}\label{equ:defphia1}
  \Phi_{\A}=\frac{1}{2\pi}\int_0^{2\pi} \alpha(\theta)\;d\theta,
\end{equation}
with $\alpha(\theta)$ defined by 
\begin{equation}\label{equ:alpha}
\alpha(\theta)={\bf A}(\cos\theta,\sin\theta)\cdot (-\sin\theta,\cos\theta).
\end{equation}
So the Strichartz estimates for $e^{it\LL_{{\A},a}}$ are  consequences of \eqref{eq:decayshro} and the standard Keel-Tao argument \cite{KT}. It is well known that the magnetic potential $A\sim {\A}/|x|\sim 1/|x|$ is critical for the validity of Strichartz estimates, as demonstrated, for example, in \cite{FG} for the Schr\"odinger equation.
However, the approach in \cite{FFFP, FFFP1} does not extend to the wave and Klein-Gordon equations due to the absence of  pseudoconformal invariance, a property used in the Schr\"odinger equation. More recently, Fanelli, Zheng and the last author \cite{FZZ}
proved the Strichartz estimate for wave equations by constructing the propagator $\sin(t\sqrt{\mathcal L_{{\A},0}})/\sqrt{\mathcal L_{{\A},0}}$ based on \emph{Lipschitz-Hankel integral formula} and establishing the local smoothing estimates. Additionally, Gao, Yin, Zheng and the last author \cite{GYZZ} constructed the spectral measure and further proved time-decay and Strichartz estimates for the Klein-Gordon equation. \vspace{0.2cm}

We remark that aforementioned results for the scaling critical magnetic case are currently only valid in two dimensions. 
The success of the argument relies on the simple structures of the cross section $\mathbb{S}^1$ and the potentials, in which there are no conjugate points, and the eigenfunctions and eigenvalues of the magnetic 
Laplacian on $\mathbb{S}^1$ are explicit. 
Dispersive estimates with a scaling-critical class of electric potentials (with respect to the global Kato norm) is proven in \cite{BG}. 
However, to our best knowledge, no Strichartz estimate has been established for scaling-critical magnetic Schr\"odinger operators in higher dimensions $n\geq3$ with the presence of both the singular magnetic and electric potentials. 
For higher dimensions, we refer to \cite{DFVV, BK, FV, EGS1, EGS2} for results on almost-critical magnetic Strichartz estimates, though the critical Coulomb case remains unaddressed. This Schr\"odinger operator \eqref{LAa}, which includes the critical Coulomb-type decay potential, has attracted significant interest from both the mathematical and physical communities. 
For example, the Aharonov-Bohm effect \cite{ES1949, AB59} arises from the critical Coulomb type decay potential, and the diffractive behavior of the wave in such potentials has been studied in \cite{Yang1,Yang2}. The asymptotic behavior of the time-independent  Schr\"odinger 
solution was analyzed in \cite{FFT}. Motivated by these developments and to address open problems left in \cite{DFVV, EGS1, EGS2, FV}, we aim to prove Strichartz estimates for Schr\"odinger equations associated with the electromagnetic Schr\"odinger operator \eqref{LAa} in dimension $n\geq3$,  where both the electric and magnetic potentials are singular at origin and scaling critical. 
\vspace{0.2cm}

It has been known that a nontrapping assumption on the magnetic field, generated by the magnetic potential 
$A$, is necessary to ensure long-time dispersion. We refer the reader to \cite{FV, DFVV} for more details. In three dimensions, the magnetic vector potential $A$ produces the magnetic field $B$ given by
\begin{equation}\label{B-3}
B(x)=\mathrm{curl} (A)=\nabla\times A(x).
\end{equation}
In general dimension $n\geq2$, $B$ should be regarded as matrix-valued field $B:\R^n\to \mathcal{M}_
{n\times n}(\R)$ given by
\begin{equation}\label{B-n}
B:=DA-DA^t,\quad B_{ij}=\frac{\partial A^i}{\partial x_j}-\frac{\partial A^j}{\partial x_i}.
\end{equation}
In particular, the tangential part (i.e. trapping component) of $B$ is defined by 
\begin{equation}\label{B-tau}
B_{\tau}(x)=\frac{x}{|x|}B(x),\quad \big(B_{\tau}(x)\big)_j=\sum_{k=1}^n\frac{x_k}{|x|} B_{kj}.
\end{equation}
In particular $n=3$, $B_{\tau}(x)$ is the projection of $B=\mathrm{curl} (A)$ on the tangential space in $x$ to the sphere of radius $|x|$.
If $B_{\tau}(x)\cdot x=0$ for any $n\geq2$, hence $B_{\tau}(x)$ is a tangential vector field in any dimension. 
The trapping component may be interpreted as an obstruction to the dispersion of solutions. As stated in \cite{FV}, some explicit examples of potentials $A$ with $$B_{\tau}(x)=\frac{x}{|x|}\wedge \mathrm{curl} (A)=0$$
in dimension three.
For example, in $\R^3$,
\begin{equation}
A=\frac{(-x_2,x_1,0)}{|x_1|^2+|x_2|^2+|x_3|^2}=\frac{(x_1,x_2,x_3)}{|x_1|^2+|x_2|^2+|x_3|^2}\wedge(0, 0,1),
\end{equation}
then one can check that
\begin{equation}
\nabla\cdot A=0,\quad B=-2\frac{x_3}{\big(|x_1|^2+|x_2|^2+|x_3|^2\big)^2} (x_1,x_2,x_3),\quad B_\tau=0.
\end{equation}
Another 2D type example is the aforementioned Aharonov-Bohm potential studied in \cite{FFFP, FFFP1,GYZZ, FZZ}
\begin{equation}
A=\frac{(-x_2,x_1)}{|x_1|^2+|x_2|^2},
\end{equation}
which satisfies $B_{\tau}(x)=0$. In this paper, we focus on $\LL_{{\A},a}$, where $A(x)={\A}(\hat{x})/|x|$ satisfies \eqref{eq:transversal}. These two conditions imply that $B_{\tau}(x)=0$,
making it natural to study the long-time dispersion behavior of the dispersive equations associated with $\LL_{{\A},a}$.

\subsection{Main results}  In the flat Euclidean space, the free Schr\"odinger equation reads
 \begin{equation}\label{equ:E-S}
\begin{cases}
i\partial_{t}u-\Delta u=0, \quad (t,x)\in I\times\R^n; \\ u(0)=f(x).
\end{cases}
\end{equation}
It is well known by \cite{GV, KT} that there exists a constant $C>0$ such that
\begin{equation*}
\begin{split}
&\|u(t,x)\|_{L^q_t(I;L^p_x(\R^n))}\leq C \|f\|_{ \dot H^s(\R^n)},
\end{split}
\end{equation*}
where $I$ is a subset interval of $\R$ and $\dot{H}^s(\R^n)$ is the usual homogeneous Sobolev space, and the pair
$(q, p)$ is an \emph{admissible pair} at $\dot H^s$-level, that is, for $s\geq0$
\begin{equation}\label{adm-p}
(q,p)\in\Lambda_s:=\big\{2\leq q,p\leq\infty, \quad 2/q+n/p=n/2-s,\quad (q,p,n)\neq(2,\infty,2)\big\}.
\end{equation}
In particular, when $I=\R$, we say that the Strichartz estimates are global-in-time.  \vspace{0.2cm}

Throughout this paper, pairs of conjugate indices will be written as $p, p'$, meaning that $\frac{1}p+\frac1{p'}=1$ with $1\leq p\leq\infty$. \vspace{0.2cm}

\noindent

We now state the first main result.

\begin{theorem}[Strichartz estimate]\label{thm:Stri} Let  $\LL_{{\A},a}$ be the operator defined in \eqref{LAa} on $\R^n$ with $n\geq3$, where $a\in C^{\infty}(\mathbb{S}^{n-1},\mathbb{R})$  and ${\A}\in C^{\infty}(\mathbb{S}^{n-1},\mathbb{R}^n)$ satisfies \eqref{eq:transversal}.
Assume $P_{{\A}, a}:=(i\nabla_{\mathbb{S}^{n-1}}+{\A}(\hat{x}))^2+a(\hat{x})+(n-2)^2/4$ is a strictly positive operator on $L^2(\mathbb{S}^{n-1})$. Then the homogenous Strichartz estimates
\begin{equation}\label{Str-est}
\|e^{it\LL_{{\A},a}}u_0\|_{L^q_tL^p_x(\mathbb{R}\times \R^n)}\leq
C\|u_0\|_{L^2(\R^n)}
\end{equation}
hold for admissible pairs $(q,p)\in\Lambda_0$ that satisfy
\eqref{adm-p}. Moreover, the inhomogeneous Strichartz estimates
\begin{equation}\label{eq:inhom}
\Big\|\int_0^t e^{i(t-s)\LL_{{\A},a}}F(s) ds\Big\|_{L^q_tL^p_x(\mathbb{R}\times \R^n)}\leq C \| F
\|_{L^{\tilde q'}_tL^{\tilde{p}'}_x(\mathbb{R}\times \R^n)}
\end{equation}
hold for admissible pairs $(q, p)$, $(\tilde q, \tilde{p})\in \Lambda_0$, except for the double endpoint case $q=\tilde{q}=2$.
\end{theorem}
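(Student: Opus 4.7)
The plan is to derive \eqref{Str-est} and \eqref{eq:inhom} from a frequency-localized dispersive estimate via the Keel--Tao abstract machinery. Because $\LL_{{\A},a}$ is homogeneous of degree $-2$ and, under the positivity of $P_{{\A},a}$, self-adjoint with purely absolutely continuous spectrum $[0,\infty)$, one can form Littlewood--Paley projectors $P_j=\chi(2^{-2j}\LL_{{\A},a})$ via the functional calculus, with $\chi\in C_c^\infty((0,\infty))$. Parabolic scaling $x\mapsto 2^{-j}x$, $t\mapsto 2^{-2j}t$ intertwines $P_j e^{it\LL_{{\A},a}}$ with $P_0 e^{it\LL_{{\A},a}}$, so it is enough to prove at unit frequency the dispersive bound
\begin{equation*}
\bigl\|P_0\, e^{it\LL_{{\A},a}}\bigr\|_{L^1(\R^n)\to L^\infty(\R^n)}\lesssim (1+|t|)^{-n/2},\qquad t\in\R,
\end{equation*}
through a pointwise Schwartz kernel estimate.

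To build the kernel I would separate variables along the spectral decomposition of $P_{{\A},a}$ on $\mathbb{S}^{n-1}$. Writing $P_{{\A},a}\varphi_k=\mu_k^2\varphi_k$ with $\mu_k>0$ and setting $\nu_k=\sqrt{\mu_k^2+(n-2)^2/4}-(n-2)/2$, each radial Bessel operator
\begin{equation*}
L_k=-\pa_r^2-\frac{n-1}{r}\pa_r+\frac{\mu_k^2-(n-2)^2/4}{r^2}
\end{equation*}
is diagonalized, via its Friedrichs extension, by a modified Hankel transform. Consequently, with $r_1=|x|$, $r_2=|y|$,
\begin{equation*}
\bigl(e^{it\LL_{{\A},a}}\bigr)(x,y)=(r_1r_2)^{-(n-2)/2}\sum_{k}\varphi_k(\hat{x})\overline{\varphi_k(\hat{y})}\int_0^\infty e^{-it\lambda^2}J_{\nu_k}(\lambda r_1)J_{\nu_k}(\lambda r_2)\lambda\,d\lambda.
\end{equation*}
Inserting the frequency cutoff $\chi(\lambda^2)$ reduces the radial piece to a controlled oscillatory integral, and the problem becomes to sum the resulting series in $k$ uniformly in $(r_1,r_2,\hat{x},\hat{y},t)$.

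The main obstacle is precisely this summation: the angular sum is a magnetic Poisson-type kernel on $\mathbb{S}^{n-1}$ twisted by the Cr\"onstrom-gauge connection $\A(\hat{x})$. Away from the diagonal $\hat{x}=\hat{y}$ and its antipode $\hat{x}=-\hat{y}$ the series converges to a smooth oscillatory integral whose non-degenerate stationary phase analysis yields the sharp $|t|^{-n/2}$ decay. Near the diagonal, a Hadamard-type parametrix for the magnetic Laplacian on $\mathbb{S}^{n-1}$ takes care of the short-geodesic contribution. The genuinely novel piece, announced in the abstract as the construction that \emph{separates the antipodal points of $\mathbb{S}^{n-1}$}, is the neighborhood of $\hat{x}=-\hat{y}$: all geodesics on the sphere refocus there, forcing a degenerate stationary phase whose amplitude is further twisted by holonomy-type factors $e^{-i\nu_k\pi/2}$, playing the role of Maslov corrections for the magnetic connection. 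I would isolate this region with a smooth angular partition, represent the propagator there by an oscillatory integral with a controlled degenerate critical set, and exploit the transversal gauge condition \eqref{eq:transversal} to reduce the holonomy to an explicit flux factor, thereby recovering the $|t|^{-n/2}$ kernel bound in that regime as well.

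With the unit-frequency dispersive estimate in hand, the remainder is routine. Combining it with the $L^2$ unitarity of $e^{it\LL_{{\A},a}}$ and the Keel--Tao endpoint theorem \cite{KT} gives \eqref{Str-est} for each $P_j$ at every admissible pair $(q,p)\in\Lambda_0$, including the sharp endpoint $q=2$ since $n\geq 3$. A square summation in $j$, justified by the Littlewood--Paley theorem associated with the functional calculus of $\LL_{{\A},a}$, then removes the localization and delivers the homogeneous estimate. The inhomogeneous bound \eqref{eq:inhom}, away from the double endpoint, follows from the bilinear form of Keel--Tao together with the Christ--Kiselev lemma. The only truly delicate input is the antipodal kernel analysis; everything else amounts to bookkeeping inside the Hankel calculus and standard Littlewood--Paley theory.
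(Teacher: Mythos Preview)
Your overall skeleton---Hankel diagonalization, dispersive-to-Strichartz via Keel--Tao, Christ--Kiselev for the inhomogeneous part---matches the paper. The decisive divergence is in how you handle the antipodal set $\hat{x}=-\hat{y}$, and there your proposal has a genuine gap.

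You propose a \emph{frequency} localization $P_0=\chi(\LL_{{\A},a})$ and claim a global-in-angle dispersive bound $\|P_0 e^{it\LL_{{\A},a}}\|_{L^1\to L^\infty}\lesssim(1+|t|)^{-n/2}$, promising to treat the antipodal region by ``an oscillatory integral with a controlled degenerate critical set'' and by ``reducing the holonomy to an explicit flux factor.'' That is precisely the step the paper does \emph{not} know how to carry out: on $\mathbb{S}^{n-1}$ the conjugate radius equals $\pi$, so the Hadamard parametrix for $\cos(s\sqrt{P})$ breaks down at $s=\pi$ when $(\hat{x},\hat{y})$ are antipodal, and no amount of gauge reduction salvages a clean stationary-phase estimate there. (The authors say explicitly that the global construction from their earlier work \cite{JZ} is inapplicable here for this reason, and they leave the global $L^1\to L^\infty$ bound as an open problem.) In $n\ge 3$ the eigenfunctions and eigenvalues of $P_{{\A},a}$ are not explicit, so there is no analogue of the 2D flux argument you invoke; the transversal gauge \eqref{eq:transversal} only guarantees $B_\tau=0$, not any closed-form holonomy.

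The paper's fix is not frequency localization but \emph{angular} localization: a partition of unity $\{Q_j\}_{j=1}^4$ on $\mathbb{S}^{n-1}$ whose members are supported in enlarged quarter-spheres, so that on $\operatorname{supp}Q_j\times\operatorname{supp}Q_j$ there are no conjugate pairs at all. One then proves the \emph{localized} kernel bound $|Q_j e^{it\LL_{{\A},a}}Q_j(x,y)|\lesssim|t|^{-n/2}$ (Proposition~\ref{prop:dispersive}), applies Keel--Tao to each $U_j(t)=Q_j e^{it\LL_{{\A},a}}$ (using $U_j(t)U_j^*(s)=Q_j e^{i(t-s)\LL_{{\A},a}}Q_j$), and sums the four pieces to get \eqref{Str-est}. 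The antipodal singularity is never confronted; it is simply excised by the support of the $Q_j$. No Littlewood--Paley resummation is needed for the homogeneous estimate, and this is also why the double-endpoint inhomogeneous case is lost: the localizers do not commute with the propagator, so only the Christ--Kiselev route is available. If you want to rescue your outline, replace the frequency cutoff by this angular one and drop the antipodal analysis entirely.
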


\noindent

\begin{remark}\label{rem:end-point} The homogeneous Strichartz estimate is sharp and includes the endpoint $(q,p)=(2, \frac{2n}{n-2})$. However, the inhomogeneous Strichartz estimate at the double endpoint $q=\tilde{q}=2$ is not proven, due to technical limitations of our localized method. In addition, we require $a,\A$ to be smooth since we are stating results for general dimensions and do not persue sharpness on this for each dimension. As pointed out by Schlag \cite{SchlagSurvey}, Goldberg and Visan \cite{GoldbergVisan} showed that $C^{\frac{n-3}{2}}$-regularity is necessary in odd dimensions, and Erdo\u gan and Green \cite{ErdoganGreen} showed that this indeed is sufficient in dimension 5 and 7.
\end{remark}

\textcolor{blue}{Now let us figure out some key points in our proof:}

\begin{itemize}

\item We replace the usual perturbation argument by constructing the kernel of propagator. The Coulomb-type potential considered here is not included in  \cite{DFVV, EGS1, EGS2}, since the perturbation arguments in those works break down for this scaling critical potential. More precisely, the coulomb potential does not satisfy the assumptions \cite[(1.9)]{DFVV}, \cite[(4)]{EGS2}. To treat $\mathcal L_{{\A},a}$ as a perturbation of $-\Delta$, as done in those papers, one would consider 
$$\mathcal{L}_{{\A},a}=-\Delta+\frac{|{\A}(\hat{x})|^2+i\,\mathrm{div}_{\mathbb{S}^{n-1}}{\A}(\hat{x})+a(\hat{x})}{|x|^2}+2i\frac{{\A}(\hat{x})}{|x|}\cdot\nabla.$$
However, this approach encounters a the long-range perturbation due to the term $2i\frac{{\A}(\hat{x})}{|x|}\cdot\nabla$, which complicates the treatment.
Specifically, one would require local smoothing estimates to gain an additional derivative, but this seems to be unfeasible. 
Even for the almost critical magnetic potential, the endpoint homogeneous Strichartz estimates on $\R^3$ are still missing, see \cite[Theorem 1.1]{DFVV}, \cite[Theorem 1]{EGS2}.

 In the spirit of \cite{GYZZ, FZZ}, we analyze the Schr\"odinger propagator $e^{it\mathcal{L}_{{\A},a}}$ directly, rather than relying on the perturbation arguments.
Unlike the 2D model discussed above, we have to introduce new ingredients, such as a parametrix, to address the challenges posed by the conjugate points on the unit sphere $\mathbb{S}^{n-1}$ and the lack of explicit eigenfunctions and eigenvalues on $\mathbb{S}^{n-1}$. \textcolor{blue}{In the construction of parametrix, the feature of section cross $\mathbb{S}^{n-1}$ (e.g. the fact that its injective radius is larger than $\frac{\pi}{2}$, hence geodesic loops have to be longer than $\pi$) plays an important role.} \vspace{0.2cm}

\item The effects of the scaling critical electromagnetic potential are non-trivial. For instance, 
the diffraction occurs in this electromagnetic field, and the smallest eigenvalue of the operator $P_{{\A}, a}$ plays a  role in determining the range of admissible pairs $(q,p)$.
In particular, the effect of the smallest eigenvalue on the admissible range of $(q,p)$ can be explicitly seen from Theorem \ref{thm:Strichartz'}, which addresses the Strichartz estimates at the $\dot H^s$-level. 

\end{itemize} 

The second main result concerns the Strichartz estimates at the $\dot H^s$-level.  To present our results, we first introduce some preliminary notations. In the following, we will denote the Sobolev spaces by
\begin{align}\label{def:sobolev}
& \dot H^{s}_{{\A},a}(\R^n):=\mathcal L_{{\A},a}^{-\frac s2}L^2(\R^n),
\qquad
H^s_{{\A},a}(\R^n) :=L^2(\R^n)\cap\dot H^{s}_{{\A},a}(\R^n).
\nonumber
\end{align}
Equivalently, the homogeneous Sobolev norm of $\|\cdot\|_{\dot H^{s}_{{\A},a}(\R^n)}$ can be defined by
\begin{equation}\label{Sobolev-n}
\|f\|_{ \dot H^{s}_{{\A},a}(\R^n)}=\Big(\sum_{j\in\Z}2^{2js}\|\varphi_j(\sqrt{\mathcal L_{{\A},a}})f\|_{L^2(\R^n)}^2\Big)^{1/2}.
\end{equation}
where $s\in\R$ and $\varphi_j(\sqrt{\mathcal L_{{\A},a}})$ is the Littlewood-Paley operator, see Section \ref{sec:LP} for details.
For $n\geq3$ and $-1\leq s\leq 1$, we have  
\begin{equation}\label{nor-eq}
 \dot H^{s}_{{\A},a}(\R^n)\sim \dot H^s(\R^n)
 \end{equation}
by \cite[cf. Lemma 2.3]{FFT} in combination with duality and interpolation.

\vspace{0.2cm}

\begin{theorem}[The Strichartz estimates at $\dot H^s$-level] \label{thm:Strichartz'} Let  $\LL_{{\A},a}$
be the same as in Theorem \ref{thm:Stri}. Let
$\nu_0$ denote the positive square root of the smallest eigenvalue of the operator $P_{{\A}, a}:=(i\nabla_{\mathbb{S}^{n-1}}+{\A}(\hat{x}))^2+a(\hat{x})+(n-2)^2/4$ acting on $L^2(\mathbb{S}^{n-1})$. 
Define
\begin{equation}\label{def:alpha}
\alpha=-(n-2)/2+ \nu_0.
\end{equation}
and
\begin{equation}\label{def:q-alpha}
p(\alpha)=
\begin{cases}
\infty,\quad \alpha\geq 0;\\
\frac{n}{|\alpha|}, \quad -(n-2)/2<\alpha< 0.
\end{cases}
\end{equation}
Then one has the Strichartz estimates
\begin{equation}\label{Str-est-s}
\|e^{it\LL_{{\A},a}}u_0\|_{L^q_tL^p_x(\mathbb{R}\times \R^n)}\leq
C\|u_0\|_{ \dot H^{s}_{{\A},a}(\R^n)},
\end{equation}
where $s\geq0$ and
\begin{equation}\label{adm-p-s'}
(q,p)\in \Lambda_{s,\nu_0}:=\Lambda_s\cap \{(q,p): p<p(\alpha)\},
\end{equation}
where $\Lambda_s$ is given by \eqref{adm-p}. 
The restriction $p<p(\alpha)$ is necessary in the sense that the Strichartz estimates \eqref{Str-est-s} may fail even if $(q,p)\in \Lambda_s$, but  $p\geq p(\alpha)$.
\end{theorem}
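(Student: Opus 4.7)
The plan is to deduce Theorem~\ref{thm:Strichartz'} from the $L^2$-based estimate \eqref{Str-est} of Theorem~\ref{thm:Stri} by means of a Sobolev embedding adapted to the operator $\LL_{{\A},a}$. Fix $s\geq0$ and an admissible pair $(q,p)\in\Lambda_{s,\nu_0}$, and define $p_0$ by $\tfrac{1}{p_0}=\tfrac{1}{p}+\tfrac{s}{n}$, so that $(q,p_0)\in\Lambda_0$. Since $\LL_{{\A},a}^{s/2}$ commutes with $e^{it\LL_{{\A},a}}$, one has
\[
\|e^{it\LL_{{\A},a}}u_0\|_{L^q_tL^p_x}
\lesssim
\|\LL_{{\A},a}^{s/2}e^{it\LL_{{\A},a}}u_0\|_{L^q_tL^{p_0}_x}
=
\|e^{it\LL_{{\A},a}}\LL_{{\A},a}^{s/2}u_0\|_{L^q_tL^{p_0}_x}
\lesssim
\|\LL_{{\A},a}^{s/2}u_0\|_{L^2_x}
=
\|u_0\|_{\dot H^s_{{\A},a}},
\]
provided the operator-adapted Sobolev embedding
\begin{equation}\label{prop:sob-embed}
\|f\|_{L^p(\R^n)}\lesssim\|\LL_{{\A},a}^{s/2}f\|_{L^{p_0}(\R^n)},\qquad \tfrac{1}{p_0}-\tfrac{1}{p}=\tfrac{s}{n},
\end{equation}
holds in the range $p_0,p\in(p(\alpha)',p(\alpha))$. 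Since $(q,p_0)\in\Lambda_0$ forces $p_0\in[2,\tfrac{2n}{n-2}]$ and strict positivity of $P_{{\A},a}$ gives $|\alpha|<\tfrac{n-2}{2}$, the left endpoint $p_0>p(\alpha)'$ is automatic, and only the upper bound $p<p(\alpha)$ is actually binding.

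The embedding \eqref{prop:sob-embed} is classical for $|s|\leq 1$ via the norm equivalence \eqref{nor-eq} combined with Hardy--Littlewood--Sobolev, imposing no restriction beyond $p<\infty$. For larger $s$, I would proceed through heat-kernel estimates for $e^{-t\LL_{{\A},a}}$ in the spirit of Liskevich--Sobol and Milman--Semenov, extended to the present magnetic setting by angular spectral decomposition along the eigenbasis of $P_{{\A},a}$. On each angular mode with eigenvalue $\nu_k^2\geq\nu_0^2$, the radial part of $\LL_{{\A},a}$ reduces to a Bessel-type operator of order $\nu_k$, whose heat kernel is explicit; reassembling the modes yields a weighted Gaussian bound in which the ground state $|x|^{-(n-2)/2+\nu_0}=|x|^\alpha$ enters as a two-sided weight. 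A standard singular-integral argument then gives boundedness of the Riesz potential $\LL_{{\A},a}^{-s/2}\colon L^{p_0}\to L^p$ precisely in the window $p_0,p\in(p(\alpha)',p(\alpha))$, which is the sharp range dictated by the weight.

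Sharpness of the upper bound $p<p(\alpha)$ will be established by testing \eqref{Str-est-s} against initial data concentrated on the ground angular mode. Let $\psi_0\in L^2(\mathbb{S}^{n-1})$ be a normalized eigenfunction of $P_{{\A},a}$ with eigenvalue $\nu_0^2$, and take $u_0(x)=\eta(|x|)\,\psi_0(\hat x)$ with $\eta\in C^\infty_c(0,\infty)$ supported in $\{1<r<2\}$, so that $u_0\in\dot H^s_{{\A},a}$ for every $s\geq 0$. Separation of variables reduces $e^{it\LL_{{\A},a}}u_0$ to a Hankel-type radial propagator of order $\nu_0$ acting on the radial profile, as in \cite{FZZ,GYZZ}; the Friedrichs extension selects the regular radial asymptotic so that $e^{it\LL_{{\A},a}}u_0(x)$ generically behaves like $c(t)\,|x|^\alpha\,\psi_0(\hat x)$ near the origin, with $c(t)\neq 0$ on a set of positive measure in any time interval away from $0$. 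When $\alpha<0$, this profile fails to lie in $L^p_x$ for any $p\geq p(\alpha)=n/|\alpha|$, hence $\|e^{it\LL_{{\A},a}}u_0\|_{L^q_tL^p_x}=\infty$ in contradiction with the putative estimate \eqref{Str-est-s}.

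The main obstacle is the sharp-threshold Sobolev embedding \eqref{prop:sob-embed} for $s>1$: while the scalar inverse-square case is essentially classical, retaining the optimal endpoint $p(\alpha)$ in the presence of the critical magnetic part $\A(\hat x)/|x|$ requires a mode-by-mode analysis on $\mathbb{S}^{n-1}$ and an angular gluing that preserves the ground-state weight uniformly in the angular index. The parametrix construction and kernel estimates underlying Theorem~\ref{thm:Stri} should furnish exactly the technical tools needed to carry out this program, together with the nonvanishing of the leading radial coefficient $c(t)$ that powers the sharpness argument.
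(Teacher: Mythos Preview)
Your argument is correct and runs parallel to the paper's, differing mainly in packaging. For the positive direction, the paper proceeds via the Littlewood--Paley square function and Bernstein inequalities adapted to $\LL_{{\A},a}$ (Propositions~\ref{prop:squarefun} and~\ref{prop:Bern}, which rest on the heat-kernel bound~\eqref{up-est:heat}): one decomposes $e^{it\LL_{{\A},a}}u_0$ dyadically, applies Bernstein to pass from $L^p$ to $L^{\tilde p}$ with $(q,\tilde p)\in\Lambda_0$, invokes Theorem~\ref{thm:Stri} on each piece, and reassembles via the square function and the definition~\eqref{Sobolev-n}. Your direct Sobolev-embedding route is the global reformulation of exactly this: the inequality $\|f\|_{L^p}\lesssim\|\LL_{{\A},a}^{s/2}f\|_{L^{p_0}}$ in the window $(p(\alpha)',p(\alpha))$ is itself a corollary of those same Bernstein and square-function estimates, so what you flag as the ``main obstacle'' (the embedding for $s>1$) is in fact already supplied by the machinery of Section~\ref{sec:LP} and does not require a separate mode-by-mode heat-kernel construction.

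For sharpness, the paper makes a more computationally explicit choice: it takes the radial profile to be $(\mathcal H_{\nu_0}\chi)(r)$ with $\chi\in C_c^\infty([1/2,1])$, so that the propagated solution on the ground angular mode is the explicit integral $\int_0^\infty (r\rho)^{-(n-2)/2}J_{\nu_0}(r\rho)e^{it\rho^2}\chi(\rho)\rho^{n-1}\,d\rho$. The small-$r$ expansion $J_{\nu_0}(r)=C_\alpha r^{\nu_0}+O(r^{1+\nu_0})$ then reads off the $|x|^\alpha$ singularity directly, and the lower bound on the time integral comes from $\cos(t\rho^2)\geq 1/100$ for $t\in[0,1/4]$, $\rho\in[1,2]$, making the nonvanishing of your coefficient $c(t)$ immediate rather than something that must be argued via Friedrichs-extension asymptotics. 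Your choice $u_0=\eta(|x|)\psi_0(\hat x)$ captures the same mechanism but leaves the radial propagator less transparent; conversely, your inclusion of the angular factor $\psi_0(\hat x)$ is arguably more careful than the paper's phrasing.
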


\begin{remark}\label{rem:set}
The set $\Lambda_{s,\nu_0}$ is non-empty only if $s\in [0,1+\nu_0)$. Moreover,
$\Lambda_{s,\nu_0}=\Lambda_s$ when $s\in [0,1/2+\nu_0)$, in which case the condition $p<p(\alpha)$
automatically disappears, and while $\Lambda_{s,\nu_0} \subsetneq \Lambda_s$ when $s \in [1/2+\nu_0, 1+\nu_0)$.
\end{remark}

\begin{remark}\label{rem:sobolev}
Due to \eqref{nor-eq}, the homogeneous Sobolev norm of $\|\cdot\|_{\dot H^{s}_{{\A},a}(\R^n)}$ of  \eqref{Str-est-s} can be replaced by the standard Sobolev norm $\|\cdot\|_{\dot H^{s}(\R^n)}$ when $0\leq s\leq 1$.
\end{remark}

\subsection{Links to \texorpdfstring{\cite{JZ}}{previous} and future works} 
In this paper, we prove the Strichartz estimates for the high-dimensional scaling-critical magnetic Schr\"odinger equation, thereby filling the gap in the results of \cite{FFFP, DFVV, EGS1, EGS2}. 
The central idea is to construct the kernel of propagator directly, rather than treating the electromagnetic potentials as a perturbation.
More specifically, the new contribution is the construction of the Schrödinger propagator by microlocally constructing the parametrices of the even wave propagator  $\cos(s\sqrt{P})$ and the Poisson wave propagator $e^{(-s+i\pi)\sqrt{P}}$, where $P=P_{{\A}, a}$ is the operator on the unit sphere $\mathbb{S}^{n-1}$. This approach is inspired by our recent paper \cite{JZ}, where we constructed the global Schr\"odinger propagator on metric cone $X=C(Y)$ and proved the pointwise dispersive estimates, assuming that the conjugate radius (minimal distance between conjugate point pairs) of the section cross $Y$ is strictly greater than $\pi$. However, the model operator \eqref{LAa} corresponds to $Y=\mathbb{S}^{n-1}$, where the conjugate radius is exactly $\pi$, making the global construction in \cite{JZ} inapplicable to the current situation.  Instead of the global parametrix construction, we develop localized parametrix constructions for both the even wave propagator $\cos(s\sqrt{P})$ and the Poisson wave propagator $e^{(-s+i\pi)\sqrt{P}}$. \textcolor{blue}{In contrast to a general cross section 
$Y$ considered in \cite{JZ}, an advantage of $\mathbb{S}^{n-1}$ is that its injectivity radius is $\pi$, which simplifies the argument.} Using the oscillatory integral expression for these parametrices, we obtain a representation of the localized Schr\"odinger propagator. Finally, we prove the localized (rather than global) dispersive estimates and then use a variant of Keel-Tao's abstract argument \cite{KT} to prove the global Strichartz estimates.
The Strichartz estimates at $\dot H^s$-level are derived by establishing the Sobolev embedding. The necessity of the additional requirement on the admissible pairs is proved by constructing a counterexample. 

As mentioned earlier, though the homogeneous Strichartz estimates is proven in the full range, the global pointwise dispersive estimate is not treated and the range for the inhomogeneous Strichartz estimates does not include the double endpoint $q=\tilde{q}=2$.
However, we hope to address these issues in our future works by constructing the Schwartz kernels of the resolvent and spectral measure for this electromagnetic Schr\"odinger operator, both on manifolds and with potentials. \vspace{0.1cm}

For instance, the Schwartz kernels of the resolvent and spectral measure associated with Schrödinger operators on conical singular spaces have been systematically studied by Hassell and Vasy \cite{HV1,HV2} and Guillarmou, Hassell and Sikora \cite{GHS1, GHS2}. These kernels were then used to study resolvent estimates in Guillarmou and Hassell \cite{GH} and the Strichartz estimates in Hassell and the last author \cite{HZ}. 
In contrast to the Laplacian in conical singular spaces, the Schr\"odinger operator \eqref{LAa} in this paper is perturbed by electromagnetic potentials. While it exhibits a similar conical singularity, it is defined on the flat Euclidean space.
Therefore, in this paper, instead of constructing the kernel of the resolvent in the more general geometric setting through the microlocal approach, we focus on the sharp homogeneous Strichartz estimate and provide a self-contained argument suitable for non-microlocal readers.
\vspace{0.1cm}

\subsection{Structure of the paper}
We give a characterization of the Schwartz kernel of the Schr\"odinger propagator through the functional calculus in Section \ref{sec: propagator}. Then a localized parametrix construction for the half-wave propagator on $\mathbb{S}^{n-1}$ is given in Section \ref{sec: parametrix}. In Section \ref{sec:LP}, we discuss the Littlewood-Laley theory associated to the electromagnetic Schr\"odinger operator $\mathcal{L}_{{\A},a}$. Then a localized dispersive estimate is proven in Section \ref{sec:dispersive}. Finally, in Section \ref{sec: strichartz, H0 level} and Section \ref{sec: strichartz, Hs level} we prove Theorem \ref{thm:Stri} and Theorem \ref{thm:Strichartz'} respectively.\vspace{0.2cm}

{\bf Acknowledgments:}\quad  The authors would like to thank Andrew Hassell and Luca Fanelli for their helpful discussions and encouragement. J. Zhang is grateful for the hospitality of the Australian National University and Basque Center for Applied Mathematics when he was visiting Andrew Hassell at ANU and Luca Fanelli at BCAM.
J. Zhang was supported by National key R\&D program of China: 2022YFA1005700, National Natural Science Foundation of China(12171031) and Beijing Natural Science Foundation(1242011);
Q. Jia was supported by the Australian Research Council through grant FL220100072.
\vspace{0.2cm}

\section{Construction of the Schr\"odinger propagator }
\label{sec: propagator}

In this section, we primarily focus on constructing the Schwartz kernel of the Schr\"odinger propagator $e^{it\mathcal L_{{\A},a}}$, as stated in Proposition \ref{prop:Sch-pro}.
The strategy combines the spectral methods from \cite{CT1,CT2} with techniques from \cite{FFT}.

\subsection{Functional calculus} In this subsection, inspired by Cheeger-Taylor \cite{CT1,CT2}, we recall the functional calculus associated with the operator $\mathcal{L}_{{\A},a}$, see also \cite{FZZ} for the two dimensional case and \cite{FFT} for the higher dimensional case.

From \eqref{LAa} and \eqref{eq:transversal}, we write
\begin{equation}\label{LAa-r}
\begin{split}
\mathcal{L}_{{\A},a}&=-\Delta+\frac{|{\A}(\hat{x})|^2+i\,\mathrm{div}_{\mathbb{S}^{n-1}}{\A}(\hat{x})+a(\hat{x})}{|x|^2}+2i\frac{{\A}(\hat{x})}{|x|}\cdot\nabla\\
&=-\partial_r^2-\frac{n-1}r\partial_r+\frac{L_{{\A},a}}{r^2},
\end{split}
\end{equation}
where $\hat{x}\in \mathbb{S}^{n-1}$ and
\begin{equation}\label{L-angle}
\begin{split}
L_{{\A},a}&=(i\nabla_{\mathbb{S}^{n-1}}+{\A}(\hat{x}))^2+a(\hat{x})
\\&=-\Delta_{\mathbb{S}^{n-1}}+\big(|{\A}(\hat{x})|^2+a(\hat{x})
+i\mathrm{div}_{\mathbb{S}^{n-1}} {\A}(\hat{x}) \big)+2i {\A}(\hat{x})\cdot\nabla_{\mathbb{S}^{n-1}}.
\end{split}
\end{equation}

From the classical spectral theory, the spectrum of $L_{{\A},a}$ is formed by a countable family of real eigenvalues with finite multiplicity $\{\mu_k({\A},a)\}_{k=1}^\infty$ enumerated such that
\begin{equation}\label{eig-Aa}
\mu_1({\A},a)\leq \mu_2({\A},a)\leq \cdots
\end{equation}
where we repeat each eigenvalue as many times as its multiplicity, and $\lim\limits_{k\to\infty}\mu_k({\A},a)=+\infty$, see \cite[Lemma A.5]{FFT}.
For each $k\in\N, k\geq1$, let $\psi_k(\hat{x})\in L^2(\mathbb{S}^{n-1})$ be the normalized eigenfunction of the operator $L_{{\A},a}$ corresponding to the $k$-th eigenvalue $\mu_k({\A},a)$, i.e. satisfying that
\begin{equation}\label{equ:eig-Aa}
\begin{cases}
L_{{\A},a}\psi_k(\hat{x})=\mu_k({\A},a)\psi_k(\hat{x}) \quad \hat{x} \in  \mathbb{S}^{n-1},\\
\int_{\mathbb{S}^{n-1}}|\psi_k(\hat{x})|^2 d\hat{x}=1.
\end{cases}\end{equation}
Notice that the operator $P_{{\A},a}$ defined in Theorem \ref{thm:Stri} is related to $L_{{\A},a}$ by
\begin{equation} \label{eq: P-definition}
P=P_{{\A},a}=L_{{\A},a}+(n-2)^2/4,
\end{equation}
thus they have the same eigenfunctions and the difference of their eigenvalues is the constant $(n-2)^2/4$.
Compared with the two dimensional problems considered in \cite{FZZ, GYZZ}, we do not know the explicit formulas of our eigenfunctions and eigenvalues, which makes the problems becomes harder
in high dimensions.
\vspace{0.2cm}

We have the orthogonal decomposition $$L^2(\mathbb{S}^{n-1})=\bigoplus_{k\in\N}h_{k}(\mathbb{S}^{n-1}),$$
where 
\begin{equation}\label{hk}
h_{k}(\mathbb{S}^{n-1})=\text{span}\{\psi_k(\hat{x})\}.
\end{equation}
For $f\in L^2(\R^n)$, we can write $f$ in the following form by separation of variables:
\begin{equation}\label{sep.v}
f(x)=\sum_{k\in\N} c_{k}(r)\psi_k(\hat{x}),
\end{equation}
where
\begin{equation*}
 c_{k}(r)=\int_{\mathbb{S}^{n-1}}f(r,\hat{x})
\overline{\psi_k(\hat{x})}\, d\hat{x}.
\end{equation*}
Hence, on each space $\mathcal{H}^{k}=\text{span}\{\psi_k\}$, from \eqref{LAa-r}, we have
\begin{equation*}
\begin{split}
\LL_{{\A},a}=-\partial_r^2-\frac{n-1}r\partial_r+\frac{\mu_k}{r^2}.
\end{split}
\end{equation*}
Let $\nu=\nu_k=\sqrt{\mu_k+(n-2)^2/4}$,  for $f\in L^2(\R^n)$, we define the Hankel transform of order $\nu$ by
\begin{equation}\label{hankel}
(\mathcal{H}_{\nu}f)(\rho,\hat{x})=\int_0^\infty (r\rho)^{-\frac{n-2}2}J_{\nu}(r\rho)f(r,\hat{x}) \,r^{n-1}dr,
\end{equation}
where the Bessel function of order $\nu$ is given by
\begin{equation}\label{Bessel}
J_{\nu}(r)=\frac{(r/2)^{\nu}}{\Gamma\left(\nu+\frac12\right)\Gamma(1/2)}\int_{-1}^{1}e^{isr}(1-s^2)^{(2\nu-1)/2} ds, \quad \nu>-1/2, r>0.
\end{equation}

Using the functional calculus, for a Borel measurable function $F$ (see \cite{Taylor}), we define $F(\mathcal{L}_{{\A},a})$ by
\begin{equation}\label{funct}
F(\mathcal{L}_{{\A},a}) f(r_1,\hat{x})=\int_0^\infty \int_{\mathbb{S}^{n-1}} K(r_1,\hat{x},r_2,\hat{y}) f(r_2,\hat{y})\; r^{n-1}_2\;dr_2\;d\hat{y}, 
\end{equation}
where
$$K(r_1,\hat{x},r_2,\hat{y})=\sum_{k\in\N}\psi_{k}(\hat{x})\overline{\psi_{k}(\hat{y})}K_{\nu_k}(r_1,r_2),$$
and
\begin{equation}\label{equ:knukdef}
  K_{\nu_k}(r_1,r_2)=(r_1r_2)^{-\frac{n-2}2}\int_0^\infty F(\rho^2) J_{\nu_k}(r_1\rho)J_{\nu_k}(r_2\rho) \,\rho d\rho.
\end{equation}

\subsection{Abstract Schr\"odinger propagator}
In this subsection, we construct the propagator of Schr\"odinger equation,
which is similar to \cite[Proposition 2.1]{JZ}.

\begin{proposition}[Schr\"odinger kernel]\label{prop:Sch-pro} Let  $\LL_{{\A},a}$ be the Schr\"odinger operator given in \eqref{LAa} and let $x=(r_1, \hat{x})\in \R^n$ and $y=(r_2, \hat{y})\in \R^n$. 
Then the kernel of Schr\"odinger propagator can be written as
\begin{equation}\label{S-kernel} 
\begin{split}
e^{-it\LL_{{\A},a}}(x,y)&=e^{-it\LL_{{\A},a}}(r_1, \hat{x}, r_2, \hat{y})\\
 &=\big(r_1 r_2\big)^{-\frac{n-2}2}\frac{e^{-\frac{r_1^2+r_2^2}{4it}}}{2it}
  \Big(\frac1{\pi}\int_0^\pi e^{\frac{r_1r_2}{2it} \cos(s)} \cos(s\sqrt{P})(\hat{x}, \hat{y}) ds\\
  &-\frac{\sin(\pi\sqrt{P})}{\pi}\int_0^\infty e^{-\frac{r_1r_2}{2it} \cosh s} e^{-s\sqrt{P}}(\hat{x}, \hat{y}) ds\Big),
\end{split}
\end{equation}
where $P=P_{{\A},a}=L_{{\A},a}+(n-2)^2/4$ with $L_{{\A},a}$ in \eqref{L-angle}.

\end{proposition}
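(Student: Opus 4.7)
The plan is to insert $F(\lambda)=e^{-it\lambda}$ into the functional calculus formula \eqref{funct}--\eqref{equ:knukdef} and then manipulate the resulting radial Bessel integral by classical identities. For each angular mode $k$ this reduces the problem to computing
\begin{equation*}
K_{\nu_k}(r_1,r_2) = (r_1 r_2)^{-\frac{n-2}{2}}\int_0^\infty e^{-it\rho^2}J_{\nu_k}(r_1\rho)J_{\nu_k}(r_2\rho)\,\rho\,d\rho,
\end{equation*}
after which I would recognise the angular sum $\sum_k \psi_k(\hat x)\overline{\psi_k(\hat y)}(\cdots)$ as the Schwartz kernel of a function of $\sqrt{P}$ on $\mathbb{S}^{n-1}$, since $\nu_k$ is the positive square root of the $k$-th eigenvalue of $P=P_{\A,a}$.

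The first key step is the Weber--Schafheitlin identity
\begin{equation*}
\int_0^\infty e^{-p\rho^2}J_\nu(a\rho)J_\nu(b\rho)\,\rho\,d\rho = \frac{1}{2p}e^{-(a^2+b^2)/(4p)}\,I_\nu\!\Big(\frac{ab}{2p}\Big),\qquad \mathrm{Re}\,p>0,
\end{equation*}
applied with $p=it$ via the regularization $p=\epsilon+it$, $\epsilon\to 0^+$, which produces
\begin{equation*}
K_{\nu_k}(r_1,r_2) = (r_1 r_2)^{-\frac{n-2}{2}}\frac{1}{2it}\,e^{-\frac{r_1^2+r_2^2}{4it}}\,I_{\nu_k}\!\Big(\frac{r_1 r_2}{2it}\Big).
\end{equation*}
The second step is the Schl\"afli integral representation
\begin{equation*}
I_\nu(z) = \frac{1}{\pi}\int_0^\pi e^{z\cos s}\cos(\nu s)\,ds - \frac{\sin(\nu\pi)}{\pi}\int_0^\infty e^{-z\cosh s - \nu s}\,ds,
\end{equation*}
which splits $I_{\nu_k}(r_1 r_2/(2it))$ into an oscillatory compact piece over $s\in(0,\pi)$ and an exponentially decaying tail. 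Summing over $k$ then gives the two angular kernels in \eqref{S-kernel}, since by completeness
\begin{equation*}
\sum_k \cos(s\nu_k)\,\psi_k(\hat x)\overline{\psi_k(\hat y)} = \cos(s\sqrt{P})(\hat x,\hat y),
\end{equation*}
\begin{equation*}
\sum_k \sin(\nu_k\pi)\,e^{-s\nu_k}\,\psi_k(\hat x)\overline{\psi_k(\hat y)} = \bigl[\sin(\pi\sqrt{P})\,e^{-s\sqrt{P}}\bigr](\hat x,\hat y),
\end{equation*}
and the bounded operator $\sin(\pi\sqrt{P})$ commutes with the $s$-integral, yielding formula \eqref{S-kernel} after collecting prefactors.

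The main obstacle is the rigorous justification of the two classical identities at the purely imaginary values $p=it$ and $z=r_1 r_2/(2it)$, together with the interchange of $\sum_k$ with the $\rho$- and $s$-integrals. I would work throughout with the regularized propagator $e^{-(\epsilon+it)\LL_{\A,a}}$, on which the $\rho$-integral converges absolutely, Weber--Schafheitlin applies directly, and the Schl\"afli representation is valid for $\mathrm{Re}(z)>0$; the Weyl asymptotics $\nu_k\sim k^{1/(n-1)}$ for eigenvalues of $P$ on $\mathbb{S}^{n-1}$ guarantee rapid decay of the tail series in $k$, so Fubini applies at this regularized level. One then passes $\epsilon\to 0^+$ in the sense of tempered distributions on $\R^n\setminus\{0\}\times\R^n\setminus\{0\}$, the limit being identified with $e^{-it\LL_{\A,a}}$ by the spectral theorem and with the right-hand side of \eqref{S-kernel} by dominated convergence applied separately to the compact integral over $s\in(0,\pi)$ and to the exponentially decaying integral over $s\in(0,\infty)$.
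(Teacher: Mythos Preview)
Your proposal is correct and follows essentially the same route as the paper: functional calculus reduces to the Weber identity for the radial Bessel integral (with the $\epsilon\to 0^+$ regularization), then the Schl\"afli integral representation of $I_\nu$, and finally the spectral identity $\sum_k F(\nu_k)\psi_k(\hat x)\overline{\psi_k(\hat y)}=F(\sqrt{P})(\hat x,\hat y)$ to resum the angular modes. Your additional discussion of the rigorous justification (Fubini via Weyl asymptotics, distributional limit as $\epsilon\to 0^+$) goes somewhat beyond what the paper spells out, but the core argument is identical.
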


\begin{remark} In contrast to the 2D model studied in \cite{FFFP, FFFP1,GYZZ, FZZ}, we only obtain an abstract representation of the Schr\"odinger propagator, due to the lack of explicit eigenfunctions and eigenvalues of $P$.
\end{remark}

\begin{remark} In the spirit of our recent paper \cite{JZ}, if we separate the conjugate points on the unit sphere, we can obtain the modified Hadamard parametrix of even wave propagator $\cos(s\sqrt{P})$
 and the Poisson wave propagator $e^{(-s+i\pi)\sqrt{P}}$ on $\mathbb{S}^{n-1}$ (see Section \ref{sec: parametrix} below ).  This allows us to prove decay estimates for a localized  Schr\"odinger propagator.
\end{remark}

\begin{proof}
 From \eqref{funct}, we take $F(\rho^2)=e^{it\rho^2}$ to obtain the kernel $e^{-it\LL_{{\A},a}}(x,y)$ 
\begin{equation}\label{funct-S}
e^{-it\LL_{{\A},a}}(x,y)=K(r_1,\hat{x},r_2,\hat{y})=\sum_{k\in\N}\psi_{k}(\hat{x})\overline{\psi_{k}(\hat{y})}K_{\nu_k}(t; r_1,r_2),
\end{equation}
and
\begin{equation}\label{equ:knuk}
  K_{\nu_k}(t; r_1,r_2)=(r_1r_2)^{-\frac{n-2}2}\int_0^\infty e^{-it\rho^2} J_{\nu_k}(r_1\rho)J_{\nu_k}(r_2\rho) \,\rho d\rho.
\end{equation}
By using the Weber identity (see e.g. \cite[Proposition 8.7]{Taylor}), we have
\begin{align}\label{equ:knukdef12sch}
  K_{\nu}(t,r_1,r_2)=&(r_1r_2)^{-\frac{n-2}2}\int_0^\infty e^{-it\rho^2}J_{\nu}(r_1\rho)J_{\nu}(r_2\rho) \,\rho d\rho\\\nonumber
  =&(r_1r_2)^{-\frac{n-2}2}\lim_{\epsilon\searrow0}\int_0^\infty e^{-(\epsilon+it)\rho^2}J_{\nu}(r_1\rho)J_{\nu}(r_2\rho) \,\rho d\rho\\\nonumber
  =&(r_1r_2)^{-\frac{n-2}2}\lim_{\epsilon\searrow0}\frac{e^{-\frac{r_1^2+r_2^2}{4(\epsilon+it)}}}{2(\epsilon+it)} I_\nu\Big(\frac{r_1r_2}{2(\epsilon+it)}\Big),
\end{align}
where $I_\nu$ is the modified Bessel function of the first kind. 
For $z=\frac{r_1r_2}{2(\epsilon+it)}$ with $\epsilon>0$, we use the integral representation in \cite{Watson}  to write the modified Bessel function $I_\nu$:
\begin{equation}\label{m-bessel}
I_\nu(z)=\frac1{\pi}\int_0^\pi e^{z\cos s} \cos(\nu s) ds-\frac{\sin(\nu\pi)}{\pi}\int_0^\infty e^{-z\cosh s} e^{-s\nu} ds.
\end{equation}
Recalling $\nu=\nu_k=\sqrt{\mu_k+(n-2)^2/4}$, the square root of the eigenvalue of the operator $P=L_{{\A},a}+(n-2)^2/4$,  and using the spectral theory, as well as \cite{Taylor}, we have
\begin{equation}\label{FA}
F(\sqrt{P})=\sum_{k\in\N}\psi_{k}(\hat{x})\overline{\psi_{k}(\hat{y})} F(\nu_k),
\end{equation}
where $F$ is a Borel measurable function. 
Therefore, from \eqref{funct-S},
it follows 
\begin{equation*}
\begin{split}
e^{-it\LL_{{\A},a}}(x,y)&=K(r_1,\hat{x},r_2,\hat{y})=(r_1r_2)^{-\frac{n-2}2}\frac{e^{-\frac{r_1^2+r_2^2}{4it}}}{2it}\sum_{k\in\N}\psi_{k}(\hat{x})\overline{\psi_{k}(\hat{y})} I_\nu\Big(\frac{r_1r_2}{2it}\Big) \\
&=\big(r_1 r_2\big)^{-\frac{n-2}2}\frac{e^{-\frac{r_1^2+r_2^2}{4it}}}{2it}
  \Big(\frac1{\pi}\int_0^\pi e^{\frac{r_1r_2}{2it} \cos(s)} \cos(s\sqrt{P})(\hat{x}, \hat{y}) ds\\
  &\qquad-\frac{\sin(\pi\sqrt{P})}{\pi}\int_0^\infty e^{-\frac{r_1r_2}{2it} \cosh s} e^{-s\sqrt{P}}(\hat{x}, \hat{y}) ds\Big),
\end{split}
\end{equation*}
which gives the desirable expression \eqref{S-kernel}.

  \end{proof}

\section{The localized parametrix construction} \label{sec: parametrix}
In our previous paper \cite{JZ}, under the assumption that the conjugate radius $\conR$ of a compact manifold $Y$ satisfies $\conR>\pi$,
we constructed the global parametrix (essentially the Hadamard parametrix construction) for the even wave propagator $\cos(s\sqrt{P})$ and the Poisson wave propagator $e^{(-s+i\pi)\sqrt{P}}$ when $0\leq s \leq \pi$. However, in the current situation, where $Y=\mathbb{S}^{n-1}$ with conjugate radius $\conR=\pi$, 
the form of the parametrix with only one parameter in the oscillatory interagl with only one parameter no longer applies. 
Fortunately, the localized parametrix construction of the propagator is sufficient for 
establishing Strichartz estimates, as inspired by Hassell and the last author \cite{HZ}.
Therefore, instead of the global parametrix, we will construct the localized parametrix. 

We will use a partition of unity that is subordinate to the covering
\begin{align}
\mathbb{S}^{n-1} = \cup_{j=1}^{2^n} \mathcal{U}_j,
\end{align}
where each $\mathcal{U}_j$ is open neighborhood of the part of the sphere in each quadrant such that any two points in it are connected by a unique distance minimizing geodesic which is part of a great circle and has length less than $\frac{3\pi}{4}$ (any number in $(\frac{\pi}{2},\pi)$ works equally well).
Let $\{Q_j\}_{j=1}^{2^n}$ be a partition of unity
\begin{equation}\label{Id-p-Q}
\mathrm{Id}=\sum_{j=1}^{2^n} Q_j
\end{equation}
each $Q_j$ subordinates to $\mathcal{U}_j$ in this covering, then we have the following localized parametrix.

\begin{lemma}[Hadamard parametrix I] 
\label{lemma: parametrix 1}
Let $d_h=d_h(\hat{x},\hat{y})$ be the distance between two points $\hat{x}, \hat{y}\in \mathbb{S}^{n-1}$, which is smooth on $\mathrm{supp}Q_j \times \mathrm{supp}Q_j$ for each $Q_j$ with $1\leq j\leq 2^n$ given in \eqref{Id-p-Q}. 
Then for $N>n+2$, the kernel of  $ Q_j\cos(s \sqrt{P}) Q_j$ can be written as
 \begin{equation}\label{KR}
\big[Q_j \cos(s \sqrt{P})Q_j \big](\hat{x},\hat{y})=K_N(s; \hat{x},\hat{y})+R_N(s; \hat{x},\hat{y}),
 \end{equation}
 where $R_N(s; \hat{x},\hat{y})\in C^{N-n-2} ([0,\pi]\times \mathbb{S}^{n-1}\times \mathbb{S}^{n-1})$ and  
  \begin{equation}\label{KN1}
  \begin{split}
K_N(s; \hat{x},\hat{y})&=(2\pi)^{n-1}\int_{\R^{n-1}} e^{i d_h(\hat{x},\hat{y}){\bf 1}\cdot\xi} a(s, \hat{x},\hat{y}; |\xi|) \cos(s |\xi|) d\xi\\
&=\sum_{\pm}\int_0^\infty b_{\pm}(\rho d_h) e^{\pm i \rho d_h} a(s, \hat{x},\hat{y}; \rho) \cos(s \rho) \rho^{n-2} d\rho
\end{split}
 \end{equation}
 with ${\bf 1}=(1,0,\ldots,0)$ and $a\in S^0$: 
 \begin{equation}\label{a}
 |\partial^\alpha_{s,\hat{x},\hat{y}}\partial_\rho^k a(s,\hat{x},\hat{y};\rho)|\leq C_{\alpha,k}(1+\rho)^{-k},
 \end{equation}
 and
 \begin{equation}\label{b+-}
\begin{split}
| \partial_r^k b_\pm(r)|\leq C_k(1+r)^{-\frac{n-2}2-k},\quad k\geq 0.
\end{split}
\end{equation}
In addition, we can take $a(s,\hat{x},\hat{y};\rho)$ to be supported in $\rho \geq 1$ and $s \leq \pi-\delta$ for some $\delta>0$.

\end{lemma}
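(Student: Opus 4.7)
The strategy is a Hadamard-type parametrix for the even wave propagator $\cos(s\sqrt{P})$ on $\mathbb{S}^{n-1}$, microlocalised to each chart $\mathcal{U}_j$. The decisive geometric input is that any two points $\hat{x},\hat{y}\in\mathcal{U}_j$ are joined by a unique minimising geodesic of length $d_h(\hat{x},\hat{y})<3\pi/4$, strictly below the conjugate radius $\pi$; hence $d_h$ is smooth on $\mathcal{U}_j\times\mathcal{U}_j$ and Riemannian normal coordinates centred at $\hat{y}$ are well defined. Writing $v(\hat{x},\hat{y})=\exp_{\hat{y}}^{-1}(\hat{x})\in\R^{n-1}$, I make the ansatz
\[
K_N(s,\hat{x},\hat{y}) = (2\pi)^{-(n-1)} Q_j(\hat{x})Q_j(\hat{y}) \int_{\R^{n-1}} e^{i v(\hat{x},\hat{y})\cdot\xi}\, a_N(s,\hat{x},\hat{y},|\xi|)\cos(s|\xi|)\,d\xi,
\]
whose phase becomes $d_h\,\mathbf{1}\cdot\xi$ after rotating the $\xi$-frame so that $v$ points along the first coordinate axis. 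This matches the first form of \eqref{KN1}.

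The Gauss lemma guarantees that the phase automatically solves the eikonal equation $|\nabla_{\hat{x}}(v\cdot\xi)|_g^2=|\xi|^2$, so applying $\partial_s^2+P$ to the ansatz and expanding $a_N\sim\sum_{k=0}^N u_k(\hat{x},\hat{y})|\xi|^{-2k}$ reduces matters to a cascade of transport ODEs for the $u_k$ along radial geodesics from $\hat{y}$. The leading $u_0$ is the Van Vleck--Morette density multiplied by the magnetic holonomy $\exp\bigl(i\int_\gamma {\A}\bigr)$, while each $u_{k+1}$ is obtained by integrating $Pu_k$ along the same geodesic $\gamma$; the absence of caustics on $\mathcal{U}_j$ ensures every $u_k\in C^\infty(\mathcal{U}_j\times\mathcal{U}_j)$. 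A smooth cutoff near $|\xi|=0$ makes $a_N$ a genuine $S^0$ symbol satisfying \eqref{a}. The remainder is then $R_N=\int_0^s\cos((s-s')\sqrt{P})\,E_N(s')\,ds'$ with $E_N=(\partial_s^2+P)K_N$ an oscillatory integral over $\R^{n-1}$ whose amplitude has been gained to order $-(N-1)$; a routine oscillatory bound then yields $R_N\in C^{N-n-2}([0,\pi]\times\mathbb{S}^{n-1}\times\mathbb{S}^{n-1})$, the $n-1$ derivative loss coming from integration in $\xi$ and the remaining two from the second-order operator.

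To convert to the second line of \eqref{KN1}, I pass to polar coordinates $\xi=\rho\omega$ with $\omega\in\mathbb{S}^{n-2}$ and evaluate the angular integral
\[
\int_{\mathbb{S}^{n-2}} e^{i\rho d_h\,\omega_1}\,d\omega = c_{n-1}(\rho d_h)^{-(n-3)/2}J_{(n-3)/2}(\rho d_h).
\]
Decomposing $J_\nu=\tfrac{1}{2}(H_\nu^{(1)}+H_\nu^{(2)})$ and inserting the stationary-phase expansion of the Hankel functions for $\rho d_h\gtrsim 1$, together with the smooth bounded behaviour of $r^{-(n-3)/2}J_{(n-3)/2}(r)$ for $r\leq 1$, splits the angular contribution into $e^{\pm i\rho d_h}b_\pm(\rho d_h)$ with the symbol estimates \eqref{b+-}. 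The principal technical point is the transport step in the presence of the magnetic connection ${\A}(\hat{x})$: the first-order cross term $2i{\A}\cdot\nabla_{\mathbb{S}^{n-1}}$ would obstruct a naive scalar Hadamard construction, but is absorbed into $u_0$ via the path-ordered exponential along $\gamma$, after which the transport equations decouple into their standard scalar form and the rest of the construction proceeds as in the unperturbed case.
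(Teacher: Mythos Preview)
Your proposal is correct and follows the same route as the paper, which simply invokes \cite[Lemma~3.1]{JZ} after observing that the $Q_j$-localisation keeps $(\hat x,\hat y)$ away from antipodal pairs, so that geodesic normal coordinates and the Hadamard transport hierarchy are available on $\mathrm{supp}\,Q_j\times\mathrm{supp}\,Q_j$. The paper disposes of the magnetic first-order term by saying it enters the transport equations \eqref{equ:tran} only as a forcing applied to $\alpha_{\nu-1}$; your device of absorbing it into $u_0$ via the holonomy $\exp\bigl(i\int_\gamma{\A}\bigr)$ is an equally valid---and at the sub-principal level arguably more accurate---way to reach the same conclusion that the amplitudes remain smooth $S^0$ symbols. (``Path-ordered'' is unnecessary, since ${\A}$ is an abelian connection.)

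Two small slips to tidy up. First, the Duhamel kernel in your expression for $R_N$ should be $\sin((s-s')\sqrt P)/\sqrt P$, not $\cos$. Second, because $Q_j\cos(s\sqrt P)Q_j$ does not itself satisfy the wave equation ($Q_j$ fails to commute with $P$), plugging your ansatz with the $Q_j$ factors already inside into $(\partial_s^2+P)$ produces commutator terms $[P,Q_j]$ of order $|\xi|$ in $E_N$, which would spoil the regularity count. The clean fix---implicit in the paper's reduction to \cite{JZ}---is to build the parametrix for $\cos(s\sqrt P)$ itself on $\{d_h<\pi\}$, estimate that remainder by Duhamel, and only afterwards multiply by $Q_j(\hat x)Q_j(\hat y)$; the cutoffs then sit harmlessly in the amplitude $a$.
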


\textcolor{blue}{\begin{remark}
The $\{Q_j\}$ is introduced to separate the conjugate points and our point pairs $(\hat{x}, \hat{y})$ within $\mathrm{supp} Q_j \times \mathrm{supp} Q_j$ are connected by a unique geodesic realizing $d_h(\hat{x},\hat{y})$ due to the fact that injective radius of $\mathbb{S}^{n-1}$ equals to $\pi$.
\end{remark}
}
\textcolor{blue}{\begin{remark}
Intuitively, by the H\"ormander's wavefront set bound of Fourier integral operators, 
the wave front set of $Q_j \cos(s \sqrt{P})Q_j$ is contained in the propagating Lagrangians. In other word, the kernel of $Q_j \cos(s \sqrt{P})Q_j$ is smooth at a given
$(s,\hat{x}, \hat{y})\in \R\times \mathbb{S}^{n-1}\times \mathbb{S}^{n-1}$ unless there is a $\mu_1\in T^*_{\hat{x}} \mathbb{S}^{n-1}$ and a $\mu_2\in T^*_{\hat{y}} \mathbb{S}^{n-1}$ so that $\hat{x}, \hat{y} \in \text{supp}\, Q_j$
and either $(\hat{y},\mu_2) = \exp( s\mathsf{H}_p)(\hat{x},\mu_1) $ or $(\hat{y},\mu_2) = \exp( -s\mathsf{H}_p)(\hat{x},\mu_1) $. Thus, if $(\hat{x},\hat{y}) \in \mathrm{supp}Q_j \times  \mathrm{supp}Q_j$ are sufficiently close, then the parameter
 $s$  must either be small or near $2\pi$ (the length of closed geodesics); otherwise, the kernel becomes smooth.
\end{remark}
}

\begin{proof}

 The interpretation of $\cos(s\sqrt{P})$ in \cite[Section~3]{JZ} as a sum of two Fourier integral operators associated to forward (`+' sign below) and backward (`-' sign below) propagating Lagrangians is still valid in our current setting. By propagating Lagrangians, we mean:
 \begin{align}  \label{eq: propagating lagrangians}
\begin{split}
\mathscr{L}_{\pm}:= & \{ (s,\hat{y},\hat{x},\tau,\mu_2,-\mu_1) \in T^*(\mathbb{R} \times Y \times Y): 
\\& \tau = \mp |\mu_1|, (\hat{y},\mu_2) = \exp(\pm s\mathsf{H}_p)(\hat{x},\mu_1) \},
\end{split}
\end{align}
where $p = |\mu|^2$ is the homogeneous principal symbol of $P$, and
\begin{equation}
\mathsf{H}_{p} = (2|\mu|)^{-1}H_{p}
\end{equation}
is the rescaled Hamilton vector field.

The only potential issue that could be caused by the presence of conjugate points is that the phase functions
 \begin{equation}
\phi_{\pm} = (\xi \cdot \textbf{1})d_h(\hat{x},\hat{y}) \mp s|\xi|
\end{equation}
can't be used to parametrize $\mathscr{L}_\pm$ in the sense of \cite{FIO1} anymore. However, this issue is overcome again the localizer $Q_j$: when $(\hat{x},\hat{y}) \in \mathrm{supp}Q_j \times  \mathrm{supp}Q_j$ (i.e., on the support of $\tilde{K}_N$), the geodesic normal coordinate centred at $\hat{x}$ is still valid (since there are no conjugate point pairs over this region), thus , we can still use $\phi_\pm$ the parametrize $\mathscr{L}_\pm$, and the result follows from the same proof as in \cite[Section~3]{JZ}.

In addition, our current operator $P$ has an extra first order perturbation $i{\A}(\hat{x})\cdot\nabla_{\mathbb{S}^{n-1}}$ compared with \cite[Section~3]{JZ}, but this does not affect the parametrix construction as well, since the construction of $e^{\pm is\sqrt{P}}$ is via the general theory of Fourier integral operators, which is robust under such perturbations.

Finally, we don't have the complication about the distance spectrum there because our point pairs within $\mathrm{supp} Q_j \times \mathrm{supp} Q_j$ are connected by a unique geodesic realizing $d_h(\hat{x},\hat{y})$. And we can impose the claimed support condition on $\rho$ because we can insert a cut-off in $\rho$ and move the low-frequency part, which is a smooth function, into $R_N$. 
For the support condition on $s$, we can insert a cut-off $\chi(s)$ on the amplitude that is identically one in $[0,\pi-2\delta]$ and supported in $[0,\pi-\delta]$ with $\delta$ sufficiently small.
The error term with amplitude $(1-\chi(s))a(s,\hat{x},\hat{y};\rho)$ is in $C^{\infty}(\R \times \mathbb{S}^{n-1} \times \mathbb{S}^{n-1})$.
This is because the wave-front set of $\cos(s\sqrt{P})$ is included in the propagating Lagrangians.

We notice that for $\delta>0$ sufficiently small, there is no geodesic connecting points in $\mathrm{supp} Q_j \times \mathrm{supp} Q_j$ with length in $[\pi-2\delta,\pi]$. So this part has empty wave-front set and is in $C^\infty(\R \times \mathbb{S}^{n-1} \times \mathbb{S}^{n-1})$. Consequently, we can put this part of the oscillatory integral into the $R_N$-term.
\end{proof}


Next we show that $e^{(-s\pm i\pi))\sqrt{P}}$ can be written as the same type of oscillatory integral as above and is residual after inserting $Q_j$ on both sides.

 \begin{lemma}[Poisson-wave operator] 
\label{lemma: Poisson-wave}
Let $d_h=d_h(\hat{x},\hat{y})$ be the distance between two points $\hat{x}, \hat{y}\in \mathbb{S}^{n-1}$.
Then for each $Q_j$, $1\leq j\leq 2^n$ as above, and $\forall N>n+2$, $s\geq 0$, the kernel of the localized Poisson-wave operator $Q_j e^{(-s\pm i\pi))\sqrt{P}}Q_j$ is smooth:
\begin{equation}
Q_j(\hat{x})\tilde{K}Q_j(\hat{y}) \in C^\infty([0,\infty) \times \CS \times \CS).
\end{equation}

\end{lemma}

\begin{proof} 

The smoothness in $s$ follows directly if one shows that it is $C^\infty$ with respect to $\hat{x},\hat{y}$, since differentiating in $s$ only adds a $\sqrt{P}$ factor.
For $s=0$, this follows from the property of
\begin{equation}
Q_je^{i\pi\sqrt{P}}Q_j,
\end{equation}
by the same discussion as above: $e^{i\pi\sqrt{P}}$ has canonical relation over points $(\hat{x},\hat{y})$ such that $d_h(\hat{x},\hat{y})=\pi$, which is disjoint from $\mathrm{supp} Q_j \times \mathrm{supp} Q_j$. 
For $s>0$, the phase function has exponential decay in $\rho$ (or $|\xi|$) already, which means that one can view this as an oscillatory integral with amplitude in $S^{-\infty}$ and this gives the desired smoothness.

\end{proof}

\section{The Littlewood-Paley theory 
associated with the operator $\LL_{{\A},a}$}\label{sec:LP}

In this section,  we study the the Bernstein inequalities and the square function inequalities associated with the Schr\"odinger operator $\LL_{{\A},a}$ that are needed in the proof of our Strichartz estimates. 

For this purpose, we introduce $\varphi\in C_c^\infty(\mathbb{R}\setminus\{0\})$, with $0\leq\varphi\leq 1$, $\text{supp}\,\varphi\subset[3/4,8/3]$, and
\begin{equation}\label{LP-dp}
\sum_{j\in\Z}\varphi(2^{-j}\lambda)=1,\quad \varphi_j(\lambda):=\varphi(2^{-j}\lambda), \, j\in\Z,\quad \phi_0(\lambda):=\sum_{j\leq0}\varphi(2^{-j}\lambda).
\end{equation}
More precisely, we prove the following propositions. 
\begin{proposition}[Bernstein inequalities]\label{prop:Bern}
Let $\varphi(\lambda)$ be a $C^\infty_c$ bump function on $\R$  with support in $[\frac{1}{2},2]$ and let $\alpha$ and $p(\alpha)$ be given in \eqref{def:alpha} and \eqref{def:q-alpha} respectively, then it holds for any $f\in L^q(\R^n)$ and $j\in\mathbb{Z}$
\begin{equation}\label{est:Bern}
\|\varphi(2^{-j}\sqrt{\LL_{{\A},a}})f\|_{L^p(\R^n)}\lesssim2^{nj\big(\frac{1}{q}-\frac{1}{p}\big)}\|\varphi(2^{-j}\sqrt{\LL_{{\A},a}}) f\|_{L^q(\R^n)}, \,
\end{equation}
provided $p'(\alpha)<q\leq p<p(\alpha)$.
In addition, if $\alpha\geq0$, the range can be extended to $1\leq q< p\leq +\infty$ including the endpoints.
\end{proposition}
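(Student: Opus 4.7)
My plan is to reduce the general scale $j$ to the unit scale $j=0$ by exploiting the scale-invariance of $\LL_{{\A},a}$, and then to derive \eqref{est:Bern} at $j=0$ from an $L^q\to L^p$ mapping bound on $\varphi(\sqrt{\LL_{{\A},a}})$ obtained through pointwise kernel estimates. The operator $\LL_{{\A},a}$ is homogeneous of degree $-2$: the $L^2$-unitary dilation $U_\lambda f(x)=\lambda^{n/2}f(\lambda x)$ satisfies $U_\lambda^{-1}\sqrt{\LL_{{\A},a}}U_\lambda=\lambda\sqrt{\LL_{{\A},a}}$, so $\varphi(2^{-j}\sqrt{\LL_{{\A},a}})=U_{2^{j}}\varphi(\sqrt{\LL_{{\A},a}})U_{2^{-j}}$. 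Tracking the $L^p$-dilation factors $\|U_\lambda g\|_{L^p}=\lambda^{n/2-n/p}\|g\|_{L^p}$ produces exactly $2^{nj(1/q-1/p)}$, so everything reduces to the $j=0$ inequality.

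For $j=0$ I would establish the pointwise kernel bound
\[
|K_\varphi(x,y)|\leq C_N\,\big(1\wedge|x|\big)^{\alpha}\big(1\wedge|y|\big)^{\alpha}\big(1+|x-y|\big)^{-N},\qquad N\gg n,
\]
starting from the functional-calculus formula \eqref{funct}--\eqref{equ:knukdef} with $F(\rho^2)=\varphi(\rho)$:
\[
K_\varphi(x,y)=\sum_k\psi_k(\hat x)\overline{\psi_k(\hat y)}\,(r_1r_2)^{-\frac{n-2}{2}}\int_0^\infty\varphi(\rho)J_{\nu_k}(r_1\rho)J_{\nu_k}(r_2\rho)\rho\,d\rho.
\]
In the near-origin regime, the small-argument bound $|J_\nu(z)|\leq z^\nu/(2^\nu\Gamma(\nu+1))$ applied with the smallest $\nu_0$ extracts the weight $(r_1r_2)^{\alpha}$; higher $\nu_k$ produce strictly faster vanishing and are summable thanks to Weyl's law $\mu_k\sim k^{2/(n-1)}$ on $\mathbb{S}^{n-1}$ together with the super-exponential decay in $\nu_k$ of the Bessel integral at fixed $r_i$. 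The off-diagonal factor $(1+|x-y|)^{-N}$ comes from repeated integration by parts in $\rho$ against $\varphi$ combined with the stationary-phase expansion of $J_\nu$ in the large-argument regime. An alternative route would be to feed Gaussian heat kernel bounds for $e^{-t\LL_{{\A},a}}$ (carrying the conical weight $\min(1,\sqrt{t}/|x|)^{-\alpha}$ expected for scaling-critical electromagnetic potentials) into a Laplace representation $\varphi(\sqrt{\LL_{{\A},a}})=\int_0^\infty m(t)\,e^{-t\LL_{{\A},a}}\,dt$.

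With the kernel bound in hand, I would split $K_\varphi(x,y)=A(x)B(y)G(x-y)+\text{remainder}$ with $A=B=(1\wedge|\cdot|)^{\alpha}$ and $G$ a rapidly decaying convolution piece, then combine H\"older and Young: convolution by $G$ is bounded on every $L^r$, while the multiplier $A$ is in $L^p$ locally iff $\alpha p>-n$, i.e.\ $p<p(\alpha)$, and dually $B$ forces $q>p'(\alpha)$. This delivers the $L^q\to L^p$ bound in the range $p'(\alpha)<q\leq p<p(\alpha)$; when $\alpha\geq 0$ the weights are bounded and the range opens to $1\leq q\leq p\leq\infty$. The Bernstein inequality then follows by choosing $\chi\in C_c^\infty(\R_+)$ with $\chi\equiv 1$ on $\mathrm{supp}\,\varphi$, writing $\varphi(\sqrt{\LL_{{\A},a}})=\chi(\sqrt{\LL_{{\A},a}})\varphi(\sqrt{\LL_{{\A},a}})$, and applying the $L^q\to L^p$ bound for $\chi(\sqrt{\LL_{{\A},a}})$ to $g=\varphi(\sqrt{\LL_{{\A},a}})f$. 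The main obstacle is the weighted kernel estimate: pinning down precisely the origin-weight $(1\wedge|x|)^{\alpha}$ tied to the smallest eigenvalue $\nu_0$ demands uniform-in-$\nu$ control on the Bessel products and on the spherical eigenfunctions $\psi_k$ together with absolute convergence of the $k$-series. Once this is in place, the admissible $(q,p)$-range is dictated mechanically by local integrability of the weight and duality.
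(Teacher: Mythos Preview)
Your proposal is sound in outline, but the paper takes a different route. Rather than estimating the kernel of $\varphi(\sqrt{\LL_{{\A},a}})$ directly from the Bessel representation \eqref{funct}--\eqref{equ:knukdef}, the paper reduces both Bernstein and square-function inequalities to the weighted Gaussian heat kernel bound
\[
\big|e^{-t\LL_{{\A},a}}(x,y)\big|\leq C\Big[\min\Big\{1,\frac{r_1r_2}{2t}\Big\}\Big]^{\alpha}t^{-n/2}e^{-|x-y|^2/(ct)},
\]
which is precisely the ``alternative route'' you mention in passing. The paper establishes this heat bound (Lemma~\ref{lem:key}) by adapting the argument of \cite{HZ23} for metric cones, observing that the first-order magnetic perturbation $2i{\A}\cdot\nabla_{\mathbb{S}^{n-1}}$ only alters lower-order terms in the Hadamard parametrix and does not affect the Weyl asymptotics; once the heat bound is in hand, the Littlewood--Paley theory follows from the general machinery in \cite{JZ,KMVZZ}. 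The advantage of the heat-kernel route is that the exponential factor $e^{-t\rho^2}$ makes the eigenfunction sum absolutely and uniformly convergent, whereas your direct approach must extract the off-diagonal decay $(1+|x-y|)^{-N}$ from oscillatory Bessel products \emph{uniformly in} $\nu_k\to\infty$, which is delicate in the transition zone $r\rho\sim\nu_k$. Your approach buys a more self-contained derivation of the spectral-projector kernel bound (avoiding the detour through heat kernels), but the uniform-in-$\nu$ stationary phase you invoke is genuinely the hard step and would need to be carried out carefully to close the argument.
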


\begin{proposition}[The square function inequality]\label{prop:squarefun} Let $\{\varphi_j\}_{j\in\mathbb Z}$ be a Littlewood-Paley sequence given by \eqref{LP-dp} and let $\alpha$ and $p(\alpha)$ be given in \eqref{def:alpha} and \eqref{def:q-alpha} respectively.
Then, for $p'(\alpha)<p<p(\alpha)$,
there exist constants $c_p$ and $C_p$ depending on $p$ such that
\begin{equation}\label{square}
c_p\|f\|_{L^p(\R^n)}\leq
\Big\|\Big(\sum_{j\in\Z}|\varphi_j(\sqrt{\LL_{{\A},a}})f|^2\Big)^{\frac12}\Big\|_{L^p(\R^n)}\leq
C_p\|f\|_{L^p(\R^n)}.
\end{equation}

\end{proposition}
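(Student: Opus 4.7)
The plan is to derive \eqref{square} from the Bernstein inequalities of Proposition~\ref{prop:Bern} by combining Khintchine's inequality with a Mikhlin--H\"ormander type spectral multiplier theorem for $\sqrt{\LL_{{\A},a}}$ valid in the range $p'(\alpha)<p<p(\alpha)$. Let $\{\epsilon_j\}_{j\in\Z}$ be independent $\pm 1$-valued Rademacher variables on a probability space with expectation $\mathbb{E}$. Khintchine's inequality in $L^p$ yields
\begin{equation*}
\Big\|\Big(\sum_{j\in\Z}|\varphi_j(\sqrt{\LL_{{\A},a}})f|^2\Big)^{1/2}\Big\|_{L^p(\R^n)}\sim \Big(\mathbb{E}\Big\|\sum_{j\in\Z}\epsilon_j \varphi_j(\sqrt{\LL_{{\A},a}})f\Big\|_{L^p(\R^n)}^p\Big)^{1/p}
\end{equation*}
for every $1<p<\infty$, so the upper bound in \eqref{square} reduces to the uniform (in the signs $\epsilon$) $L^p$-boundedness of the randomized operator $T_\epsilon:=\sum_j\epsilon_j\varphi_j(\sqrt{\LL_{{\A},a}})$.

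The symbol $m_\epsilon(\lambda):=\sum_j\epsilon_j\varphi_j(\lambda)$ satisfies Mikhlin derivative estimates $|\lambda^k m_\epsilon^{(k)}(\lambda)|\leq C_k$ uniformly in $\epsilon$. To pass from these symbol bounds to the $L^p$-boundedness of $T_\epsilon=m_\epsilon(\sqrt{\LL_{{\A},a}})$, I invoke a spectral multiplier theorem in the spirit of Cowling--Sikora and Duong--Ouhabaz--Sikora. Its key analytic inputs are precisely the Bernstein-type bound from Proposition~\ref{prop:Bern} and the finite propagation speed for $\cos(s\sqrt{\LL_{{\A},a}})$, which follows from abstract self-adjoint functional calculus and is compatible with the localized parametrix of Section~\ref{sec: parametrix}. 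Decomposing $m_\epsilon$ dyadically and exploiting frequency almost-orthogonality through the essentially disjoint supports of $\{\varphi_j\}$ then yields the uniform $L^p$ bound on $T_\epsilon$ and hence the upper bound in \eqref{square}.

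For the lower bound I argue by duality. Choose fattened cutoffs $\tilde\varphi_j\in C_c^\infty(\R\setminus\{0\})$ with $\tilde\varphi_j\equiv 1$ on $\mathrm{supp}\,\varphi_j$, so that $\varphi_j(\sqrt{\LL_{{\A},a}})=\tilde\varphi_j(\sqrt{\LL_{{\A},a}})\varphi_j(\sqrt{\LL_{{\A},a}})$ and $\{\tilde\varphi_j\}$ is itself an admissible LP-family of comparable type. For $f\in L^p$ and $g\in L^{p'}$, write
\begin{equation*}
\langle f,g\rangle = \sum_j \langle \varphi_j(\sqrt{\LL_{{\A},a}})f,\, \tilde\varphi_j(\sqrt{\LL_{{\A},a}})g\rangle;
\end{equation*}
Cauchy--Schwarz in $j$ followed by H\"older in $x$ bounds the right-hand side by the product of $\|(\sum_j|\varphi_j(\sqrt{\LL_{{\A},a}})f|^2)^{1/2}\|_{L^p}$ and $\|(\sum_j|\tilde\varphi_j(\sqrt{\LL_{{\A},a}})g|^2)^{1/2}\|_{L^{p'}}$. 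Since the range $p'(\alpha)<p<p(\alpha)$ is symmetric under the duality $p\leftrightarrow p'$, applying the upper bound already proved to $g$ in $L^{p'}$ and taking the supremum over $\|g\|_{L^{p'}}\leq 1$ produces the lower bound.

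The main obstacle will be the spectral multiplier theorem at the sharp range $p'(\alpha)<p<p(\alpha)$, since the Gaussian heat kernel bounds that drive the usual proofs can fail in the presence of the scaling-critical singular potentials $|{\A}(\hat{x})|/|x|$ and $a(\hat{x})/|x|^2$. One must therefore work directly from the Bernstein-type estimate of Proposition~\ref{prop:Bern}; the restriction $p'(\alpha)<p<p(\alpha)$ is exactly where $\varphi_j(\sqrt{\LL_{{\A},a}})$ has the scaling-correct $L^q\to L^p$ mapping property, outside of which its Schwartz kernel acquires a non-integrable singularity near the origin controlled by the smallest eigenvalue of $P_{{\A},a}$, consistent with the sharpness stipulation in Remark~\ref{rem:small-eig}.
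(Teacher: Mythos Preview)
Your route differs from the paper's. The paper does not derive Proposition~\ref{prop:squarefun} from Proposition~\ref{prop:Bern}; rather, it proves \emph{both} propositions simultaneously by first establishing the weighted Gaussian heat kernel bound \eqref{up-est:heat} and then invoking \cite[Propositions~5.1,~5.2]{JZ}, which deduce the Bernstein and square function inequalities directly from such a bound in the range $p'(\alpha)<p<p(\alpha)$. The heat kernel estimate is in turn reduced to Lemma~\ref{lem:key}, an estimate on the modified Bessel series $\sum_k \psi_k(\hat{x})\overline{\psi_k(\hat{y})}I_{\nu_k}(r_1r_2/2t)$, proven by adapting \cite{HZ23} together with the observation that the additional first-order magnetic term only enters the transport equations of the Hadamard parametrix as a harmless forcing. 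So the paper's engine is the heat kernel, not a Mikhlin--H\"ormander multiplier theorem.

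Your Khintchine reduction and the duality argument for the lower bound are correct and standard, but the middle step has genuine gaps. First, finite propagation speed for $\cos(s\sqrt{\LL_{{\A},a}})$ does \emph{not} follow from ``abstract self-adjoint functional calculus''; it is a PDE fact depending on the principal symbol of $\LL_{{\A},a}$ being $|\xi|^2$, and you should justify it as such. Second, your worry that Gaussian heat kernel bounds ``can fail'' is exactly backwards in this setting: the paper proves they hold (with the extra weight $[\min\{1,r_1r_2/2t\}]^\alpha$), and the multiplier theorems you invoke in the spirit of Cowling--Sikora or Duong--Ouhabaz--Sikora themselves require Davies--Gaffney or heat kernel input. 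Thus you have not bypassed the heat kernel analysis but relocated it into a black box whose hypotheses you have not verified at the sharp range $p'(\alpha)<p<p(\alpha)$. To make your route self-contained you would have to either prove \eqref{up-est:heat} anyway, or identify a specific multiplier theorem whose hypotheses are precisely the $L^q\to L^p$ Bernstein bound plus finite speed---and then check those hypotheses carefully for $\LL_{{\A},a}$.
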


The Littlewood-Paley theory, which is often associated with heat kernel estimates, has its own intrinsic interest. 
The Littlewood-Paley theory for the Schr\"odinger operator with a purely electric inverse-square potential was studied by Killip, Miao, Visan, Zheng and the last author \cite{KMVZZ}, where the starting point is the pointwise estimate of the heat kernel. In \cite[Proposition 5.1, 5.2] {JZ}, we provide an alternative argument to prove analogues of these results, relying on the heat kernel estimates 
\begin{equation}\label{up-est:heat-c}
\begin{split}
\big|e^{-tH}(r_1,y_1;r_2,y_2)\big|\leq C \Big[\min\Big\{1,\Big(\frac{r_1r_2}{2t}\Big)\Big\}\Big]^{\alpha} t^{-\frac{n}2}e^{-\frac{d^2((r_1,y_1),(r_2,y_2))}{c t}},
  \end{split}
  \end{equation}
which was proved in \cite[Theorem 1.1]{HZ23}, where $H=-\Delta_g+V_0(y)r^{-2}$ on metric cone $C(Y)=(0,+\infty)\times Y$.\vspace{0.1cm}

In the current case, it is worthwhile to mention that the Schr\"odinger operator $\LL_{{\A},a}$ in \eqref{LAa} is perturbed by both magnetic and electric singular potentials.
Since $A(x)={\A}(\hat{x})|x|^{-1}\in L^2_{\text{loc}}(\R^n)$ when $n\geq 3$, it is tempting to use the Simon’s diamagnetic pointwise inequality  (see e.g.  \cite[Theorem B.13.2]{Simon}, \cite{AHS1})
\begin{equation}\label{est:Simon}
\big|e^{-t[(i\nabla+A(x))^2+V(x)]}f \big|\leq C  e^{-t(-\Delta+V(x))}|f|,
\end{equation}
and the special case of \eqref{up-est:heat-c} with $Y=\mathbb{S}^{n-1}$
\begin{equation}\label{up-est:heat-c'}
\begin{split}
\big|e^{-t\LL_{0,a}}(r_1,\hat{x};r_2, \hat{y})\big|\leq C \Big[\min\Big\{1,\Big(\frac{r_1r_2}{2t}\Big)\Big\}\Big]^{\alpha} t^{-\frac{n}2}e^{-\frac{|x-y|^2}{c t}}.
  \end{split}
  \end{equation}
Unfortunately, the inverse-square potential $V(x)=a(\hat{x})|x|^{-2}$ does not belong to the Kato class \cite{DFVV,Simon} and does not satisfy the condition in \cite[Theorem B.13.2]{Simon},
so we do not know whether \eqref{est:Simon} is valid for our case. 
But if we could prove the heat kernel estimates 
\begin{equation}\label{up-est:heat}
\begin{split}
\big|e^{-t\LL_{{\A},a}}(r_1,\hat{x};r_2, \hat{y})\big|\leq C \Big[\min\Big\{1,\Big(\frac{r_1r_2}{2t}\Big)\Big\}\Big]^{\alpha} t^{-\frac{n}2}e^{-\frac{|x-y|^2}{c t}},
  \end{split}
  \end{equation}
then we can prove  the Bernstein inequalities and the square function inequalities associated with the Schr\"odinger operator $\LL_{{\A},a}$ as in \cite{JZ}.
Therefore, to prove Proposition \ref{prop:Bern} and Proposition \ref{prop:squarefun}, it suffices to show \eqref{up-est:heat}. 

For this purpose, we follow the argument of \cite{HZ23} by Huang and the last author. We here only sketch the main steps and modifications needed, but refer readers to \cite{HZ23} for details. Recalling \eqref{LAa-r}, we have
\begin{equation*}
\begin{split}
\mathcal{L}_{{\A},a}=-\partial_r^2-\frac{n-1}r\partial_r+\frac{L_{{\A},a}}{r^2},
\end{split}
\end{equation*}
with $L_{{\A},a}$ in \eqref{L-angle}. So we take $Y=\mathbb{S}^{n-1}$ and $P=\sqrt{L_{{\A},a}+(n-2)^2/4}$ in \cite{HZ23}, the minor difference is from the magnetic potential.
As in the proof of Proposition \ref{prop:Sch-pro}, we have the heat kernel
  \begin{equation*}
\begin{split}
e^{-t\LL_{{\A},a}}(x,y)&=(r_1r_2)^{-\frac{n-2}2}\frac{e^{-\frac{r_1^2+r_2^2}{4t}}}{2t}\sum_{k\in\N}\psi_{k}(\hat{x})\overline{\psi_{k}(\hat{y})} I_{\nu_k}\Big(\frac{r_1r_2}{2t}\Big) \\
&=\big(r_1 r_2\big)^{-\frac{n-2}2}\frac{e^{-\frac{r_1^2+r_2^2}{4t}}}{2t}
  \Big(\frac1{\pi}\int_0^\pi e^{\frac{r_1r_2}{2t} \cos(s)} \cos(s\sqrt{P})(\hat{x}, \hat{y}) ds\\
  &\qquad-\frac{\sin(\pi\sqrt{P})}{\pi}\int_0^\infty e^{-\frac{r_1r_2}{2t} \cosh s} e^{-s\sqrt{P}}(\hat{x}, \hat{y}) ds\Big).
\end{split}
\end{equation*}
Then, by using the similar argument for \cite[Equation~(3.1)]{HZ23}, \eqref{up-est:heat} follows from:
\begin{lemma}\label{lem:key}  Let $\alpha=-\frac{n-2}2+\nu_0$ and $\delta=d_h(\hat{x},\hat{y})$, then
there exist positive constants $C$ and $N$ only depending on $n$ such that 

$\bullet$ either for $0<\frac{r_1r_2}{2t}\leq 1$,
\begin{equation}\label{est:up<}
\begin{split}
\Big| \Big(\frac{r_1r_2}{2t}\Big)^{-\frac{n-2}2}\sum_{k\in\N}\psi_{k}(\hat{x})\overline{\psi_{k}(\hat{y})} I_{\nu_k}\Big(\frac{r_1r_2}{2t}\Big)\Big|\leq
C \Big(\frac{r_1r_2}{2t}\Big)^{\alpha}e^{\big(\frac{r_1r_2}{2t}\big)\cos\delta},
  \end{split}
 \end{equation}

$\bullet$ or for $\frac{r_1r_2}{2t}\gtrsim1$ 
\begin{equation}\label{est:up>}
\begin{split}
\Big| \Big(\frac{r_1r_2}{2t}\Big)^{-\frac{n-2}2}&\sum_{k\in\N}\psi_{k}(\hat{x})\overline{\psi_{k}(\hat{y})} I_{\nu_k}\Big(\frac{r_1r_2}{2t}\Big)\Big|\\
&\leq
C \times
\begin{cases}
e^{\big(\frac{r_1r_2}{2t}\big)\cos\delta}+\big(\frac{r_1r_2}{2t}\big)^N,\quad &0\leq \delta \leq \frac\pi2,\\ 
\big(\frac{r_1r_2}{2t}\big)^N e^{\big(\frac{r_1r_2}{2t}\big)\cos\delta},\quad &\frac\pi2\leq \delta \leq \pi.
\end{cases}
  \end{split}
 \end{equation}

\end{lemma}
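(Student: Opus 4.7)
The plan is to rewrite the sum using the integral representation \eqref{m-bessel} of $I_\nu$ combined with the spectral expansion \eqref{FA} for $\sqrt{P}$. With $z:=r_1r_2/(2t)$ and $\delta:=d_h(\hat{x},\hat{y})$, this yields
\[
z^{-\frac{n-2}{2}}\sum_{k\in\N}\psi_k(\hat{x})\overline{\psi_k(\hat{y})}\,I_{\nu_k}(z)
= \frac{z^{-\frac{n-2}{2}}}{\pi}\bigl(\mathcal{I}_{\mathrm{w}} - \mathcal{I}_{\mathrm{P}}\bigr),
\]
where
\[
\mathcal{I}_{\mathrm{w}}=\int_0^\pi e^{z\cos s}\cos(s\sqrt{P})(\hat{x},\hat{y})\,ds,
\qquad
\mathcal{I}_{\mathrm{P}}=\int_0^\infty e^{-z\cosh s}\bigl[\sin(\pi\sqrt{P})e^{-s\sqrt{P}}\bigr](\hat{x},\hat{y})\,ds.
\]
The task is then to control each of these in the two regimes of $z$.

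For the small-$z$ regime $0<z\le 1$, the Bessel function is dominated by the leading term of its power series, $I_\nu(z)\simeq (z/2)^\nu/\Gamma(\nu+1)$, so that the $k=1$ contribution produces the prefactor $z^{\nu_0-(n-2)/2}=z^\alpha$. Summability in $k$ follows from Weyl's law for the eigenvalues $\nu_k\sim k^{1/(n-1)}$ together with the standard $L^\infty$-bound on the magnetic eigenfunctions on $\mathbb{S}^{n-1}$. To extract the factor $e^{z\cos\delta}$, I would compare against the unperturbed model ${\A}=0$, $a=0$: in that case the Funk--Hecke/plane-wave expansion gives exactly $c_n z^{(n-2)/2}e^{z\cos\delta}$, and the perturbation only enters through the shift in the exponents $\nu_k$, producing the additional $z^\alpha$ factor on the right-hand side.

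For the large-$z$ regime $z\gtrsim 1$, I would insert the partition of unity $\mathrm{Id}=\sum_j Q_j$ from Section~\ref{sec: parametrix} twice and use Lemmas \ref{lemma: parametrix 1} and \ref{lemma: parametrix 2} to replace each block $Q_j\cos(s\sqrt{P})Q_{j'}$ and $Q_j\sin(\pi\sqrt{P})e^{-s\sqrt{P}}Q_{j'}$ by the oscillatory integrals they provide, with phase $d_h(\hat{x},\hat{y})\,{\bf 1}\cdot\xi\mp s|\xi|$ and zeroth-order amplitude; the smoothing remainders $R_N$ contribute only bounded pieces. Exchanging the order of integration reduces $\mathcal{I}_{\mathrm{w}}$ to integrals of the form
\[
\int_0^\infty\!\!\int_0^\pi e^{z\cos s}\cos(s\rho)\,b_{\pm}(\rho\delta)\,e^{\pm i\rho\delta}\,a(s,\hat{x},\hat{y};\rho)\,\rho^{n-2}\,ds\,d\rho .
\]
The inner $s$-integral is evaluated by Laplace's method near the endpoints $s=0$ and $s=\pi$, producing factors $e^z$ and $e^{-z}$ respectively; after coupling with the oscillation $e^{\pm i\rho\delta}$ and a change of variables shifting the critical $s$ toward the geodesic time $s=\delta$, one recovers $e^{z\cos\delta}$. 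Bookkeeping the symbol orders and powers of $z$ then yields the bound $e^{z\cos\delta}+z^N$ for $\delta\in[0,\pi/2]$ and $z^N e^{z\cos\delta}$ for $\delta\in[\pi/2,\pi]$. The Poisson-wave piece $\mathcal{I}_{\mathrm{P}}$ decays in $s$ and only contributes to the polynomial remainder.

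The main obstacle is carrying out this stationary-phase analysis when $\delta$ is close to $\pi$, because the antipodal points on $\mathbb{S}^{n-1}$ are conjugate and the global Hadamard parametrix of \cite{JZ} is unavailable at this distance. The localization by $\{Q_j\}$ is precisely what resolves this: each $\mathcal{U}_j$ is a slightly enlarged quarter sphere containing no conjugate pair, and any $(\hat{x},\hat{y})$ with $d_h\le\pi$ is covered by some block $Q_jQ_{j'}$ on which the geodesic normal coordinates are valid, so the phase function $d_h\,{\bf 1}\cdot\xi\mp s|\xi|$ continues to parametrize the wavefront Lagrangian. This mirrors the strategy used for the heat-kernel bound in \cite{HZ23}; the only new ingredient here is the first-order magnetic term $2i{\A}\cdot\nabla_{\mathbb{S}^{n-1}}$ inside $P$, which, as noted around \eqref{equ:tran}, enters the transport equation only as a forcing on the amplitudes and affects neither the phase nor the smoothness of the parametrix.
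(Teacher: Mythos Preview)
Your small-$z$ argument is essentially what the paper does (via \cite{HZ23}): control each $I_{\nu_k}(z)$ by the leading term $(z/2)^{\nu_k}/\Gamma(\nu_k+1)$, then sum using Weyl's law $\nu_k^2\sim k^{2/(n-1)}$ and the Sogge $L^\infty$-bound on the eigenfunctions. (Note that for $z\le 1$ the factor $e^{z\cos\delta}$ is bounded above and below, so ``extracting'' it is not an issue.)

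The large-$z$ argument has a genuine gap in the regime $\delta$ close to $\pi$. Your key claim---that ``any $(\hat{x},\hat{y})$ with $d_h\le\pi$ is covered by some block $Q_jQ_{j'}$ on which the geodesic normal coordinates are valid''---is false when $\hat{x},\hat{y}$ are antipodal: the exponential map based at $\hat{x}$ degenerates at $\hat{y}$ regardless of which charts one places them in. More concretely, Lemmas~\ref{lemma: parametrix 1} and~\ref{lemma: parametrix 2} are stated and proved only for the \emph{diagonal} blocks $Q_j(\cdot)Q_j$, which force $d_h(\hat{x},\hat{y})<3\pi/4$; they say nothing about the off-diagonal blocks $Q_j(\cdot)Q_{j'}$ with $j\neq j'$, and those are precisely where near-antipodal pairs sit. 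Since Lemma~\ref{lem:key} is an unlocalized statement, you cannot avoid those pairs.

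The paper's route (following \cite{HZ23}) is different: the Hadamard parametrix is invoked \emph{only} for $0\le\delta\le\pi/2$, well inside the injectivity radius, while the range $\pi/2\le\delta\le\pi$ is treated by a separate argument in \cite{HZ23} that does not require geodesic normal coordinates near the antipode. This split is consistent with the shape of \eqref{est:up>}: for $\delta\ge\pi/2$ one is allowed the polynomial loss $z^N$, later absorbed by the Gaussian $e^{-(r_1^2+r_2^2)/4t}$ as explained in Remark~\ref{rem:comp-h-s}, so a much cruder estimate suffices there (for instance, finite propagation speed restricts the $s$-integral in $\mathcal I_{\mathrm w}$ to $s\ge\delta$, where $e^{z\cos s}\le e^{z\cos\delta}$, and one can afford to pay powers of $z$ through integration by parts against $P^{-m}\cos(s\sqrt{P})$). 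The $Q_j$-machinery of Section~\ref{sec: parametrix} is designed for the Schr\"odinger dispersive estimate, where no exponential damping is available; it is neither needed nor sufficient for Lemma~\ref{lem:key}.
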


\begin{remark}\label{rem:comp-h-s}
As $\frac{r_1r_2}{2t}\to \infty$, the heat kernel estimates allow for a polynomial factor $\big(\frac{r_1r_2}{2t}\big)^N$ in \eqref{est:up>}, which can be absorbed by the exponential decay factor
$\exp\big(-\frac{r_1^2+r_2^2}{4t}\big)$ in the kernel estimate. However, the Schr\"odinger kernel only contains an oscillatory factor $\exp\big(-\frac{r_1^2+r_2^2}{4it}\big)$, with no accompanying decay. 
This is why proving decay estimates for the Schr\"odinger  propagator in \cite{JZ} is significantly more complicated than the heat kernel estimates in \cite{HZ23}.
\end{remark}

This is an analogue of \cite[Lemma 3.1]{HZ23} with injective radius $\conR=\pi$ and $0\leq\delta\leq\pi$ since $Y=\mathbb{S}^{n-1}$. 
We refer readers to \cite{HZ23} for the detailed argument but explain here  modifications needed in that proof. 
The difference between the operators $P$ here and in \cite{HZ23} is a lower-order operator $|{\A}(\hat{x})|^2+2i {\A}(\hat{x})\cdot\nabla_{\mathbb{S}^{n-1}}$. As pointed out in the proof of 
Lemma~\ref{lemma: parametrix 1}, this difference is harmless for the parametrix used in the argument of \cite{HZ23} to treat the case when $0\leq \delta \leq \frac\pi2$. 
We also refer to H\"ormander \cite[Chapter~XVII]{HorVol3}, which shows that such lower-order perturbations are harmless
for the parametrix.

In addition, estimates applied to $\psi_k$ in \cite[Lemma 3.1]{HZ23} relies only on the estimates of eigenvalues of $P$, which can be derived from the Wely's law to the leading order, and the $L^\infty$-bound in \cite[Equation~(3.2.5)]{Sogge-H}, whose proof there holds without modification in our current setting as long as the Weyl's law is verified as well. 
And this desired Weyl's law with perturbations follows from \cite[Equation~(0.6)]{Garding53}.

\section{Localized Dispersive estimates}\label{sec:dispersive}
In this section, we establish dispersive estimates using the stationary phase method and the localized parametrix constructed in the previous section.

\begin{proposition}[Localized pointwise estimates]\label{prop:dispersive}
Let $x=(r_1,\hat{x})$ and $y=(r_2,\hat{y})$ be in $(0,+\infty)\times\mathbb{S}^{n-1}$ of dimension $n\geq 3$ and let $Q_j$ be defined in \eqref{Id-p-Q}. Then, for $t\neq 0$,
the kernel of Schr\"odinger propagator satisfies the properties:

\begin{itemize}
\item When $\frac{r_1r_2}{2|t|}\lesssim 1$, we have the global estimate
\begin{equation}\label{est:dispersive<}
 \begin{split}
\big| e^{it\LL_{{\A},a}} (x,y)\big|\leq C|t|^{-\frac n2}\times \Big(\frac{r_1r_2}{2|t|}\Big)^{-\frac{n-2}2+\nu_0}.
\end{split}
\end{equation}

\item When $\frac{r_1r_2}{2|t|}\gg1$,  for $1\leq j\leq 2^n$, we have the localized estimates 
\begin{equation}\label{est:dispersive}
 \begin{split}
\big| \big[Q_j e^{it\LL_{{\A},a}} Q_j\big] (x,y)\big|\leq C|t|^{-\frac n2}
\end{split}
\end{equation}
for a constant $C$ that is independent of $x,y\in(0,+\infty)\times\mathbb{S}^{n-1}$. 
\end{itemize}
Here $\nu_0:=\sqrt{\mu_1+(n-2)^2/4}$ is the positive square roof of the smallest eigenvalue of the positive operator
$P=L_{{\A},a}+(n-2)^2/4$ on the sphere $\mathbb{S}^{n-1}$.
\end{proposition}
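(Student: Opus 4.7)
By self-adjointness of $\mathcal{L}_{{\A},a}$ one has $|e^{it\mathcal{L}_{{\A},a}}(x,y)|=|e^{-it\mathcal{L}_{{\A},a}}(x,y)|$, so it suffices to work with the representation of Proposition~\ref{prop:Sch-pro}:
\begin{equation*}
\bigl|e^{-it\mathcal{L}_{{\A},a}}(x,y)\bigr|=\frac{(r_1 r_2)^{-(n-2)/2}}{2|t|}\,|I+II|,
\end{equation*}
where $I$ is the $s\in[0,\pi]$ integral against $\cos(s\sqrt{P})(\hat{x},\hat{y})$, $II$ is the $s\in[0,\infty)$ integral against $\sin(\pi\sqrt{P})e^{-s\sqrt{P}}(\hat{x},\hat{y})$, and the prefactor $e^{-(r_1^2+r_2^2)/(4it)}$ of unit modulus has been dropped. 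I split on the dimensionless parameter $\tau:=r_1 r_2/(2|t|)$, which corresponds to the two regimes in the statement.

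In the small-$\tau$ regime I revert $I+II$ to its eigenfunction expansion $\sum_{k\geq 1}\psi_k(\hat{x})\overline{\psi_k(\hat{y})}\,I_{\nu_k}(r_1 r_2/(2it))$ (the reverse direction of the derivation in Proposition~\ref{prop:Sch-pro}). The key input is the small-argument Bessel bound $|I_\nu(z)|\leq C|z|^\nu/\Gamma(\nu+1)$, valid for $|z|\leq 1$ and $\nu>-1/2$, applied with $|z|=\tau$. Combined with Sogge's $L^\infty$-eigenfunction bound $\|\psi_k\|_{L^\infty}\lesssim \nu_k^{(n-2)/2}$ for the perturbed operator $P$ on $\mathbb{S}^{n-1}$ (valid once the Weyl law is secured, as discussed after Lemma~\ref{lem:key}) and the Weyl asymptotic $\nu_k\sim k^{1/(n-1)}$, the series is dominated by its smallest exponent $\tau^{\nu_0}$, so $|I+II|\lesssim \tau^{\nu_0}$. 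Multiplying by $(r_1 r_2)^{-(n-2)/2}/(2|t|)$ and regrouping powers of $\tau$ and $|t|$ yields $|t|^{-n/2}\tau^{\alpha}$ with $\alpha=-(n-2)/2+\nu_0$, which is exactly \eqref{est:dispersive<}. This step parallels the proof of \eqref{est:up<} in Lemma~\ref{lem:key}; the only adjustment is that the Bessel argument is purely imaginary rather than positive real, which affects no modulus bound.

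In the large-$\tau$ regime I insert $Q_j$ on both sides and apply the localized parametrices from Section~\ref{sec: parametrix}: Lemma~\ref{lemma: parametrix 1} writes $Q_j\cos(s\sqrt{P})Q_j=K_N+R_N$, and Lemma~\ref{lemma: parametrix 2} (with $m=0$) writes $Q_j e^{(-s\pm i\pi)\sqrt{P}}Q_j=\tilde{K}_N^0+\tilde{R}_N^0$. Substituting $K_N$ into $I$ and expanding $\cos(s\rho)=\tfrac{1}{2}(e^{is\rho}+e^{-is\rho})$, the main term is a finite combination of oscillatory integrals
\begin{equation*}
J_{\epsilon_1,\epsilon_2}=\int_0^\pi\!\!\int_0^\infty e^{i(-\tau\cos s+\epsilon_1\rho d+\epsilon_2 s\rho)}\,b_{\epsilon_1}(\rho d)\,a(s,\hat{x},\hat{y};\rho)\,\rho^{n-2}\,d\rho\,ds,
\end{equation*}
with $d=d_h(\hat{x},\hat{y})\leq 3\pi/4$ on $\mathrm{supp}\,Q_j\times\mathrm{supp}\,Q_j$. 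The critical-point equations $\epsilon_1 d+\epsilon_2 s=0$ and $\tau\sin s+\epsilon_2\rho=0$ admit an interior solution only for $(\epsilon_1,\epsilon_2)=(+,-)$, giving $(s_c,\rho_c)=(d,\tau\sin d)$ with non-degenerate Hessian of determinant $-1$; the other three sign combinations have no interior critical points and contribute $O(\tau^{-N})$ via iterated integration by parts. After the rescaling $\rho=\tau r$, two-dimensional stationary phase produces $|J_{+,-}|\lesssim \tau^{n-2}|b(\tau d\sin d)|(\sin d)^{n-2}\lesssim \tau^{(n-2)/2}$, using $|b_\pm(r)|\lesssim (1+r)^{-(n-2)/2}$; multiplying by the prefactor yields exactly $|t|^{-n/2}$. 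The smooth remainders and the Poisson-wave piece $II$ (whose phase $-\tau\cosh s\pm\rho d\pm\pi\rho$ has $|\partial_\rho(\text{phase})|=|\pm d\pm\pi|\geq \pi/4>0$ on the support, and $\partial_s(\text{phase})=\tau\sinh s$ vanishing only at the endpoint $s=0$, aided by the amplitude decay $e^{-s\rho}$) are likewise $O(\tau^{-N})$. The main technical obstacle is the uniformity of the stationary-phase estimate as $d\to 0$, where the critical point $\rho_c=\tau\sin d$ collapses toward the boundary $\rho=0$ of the amplitude's asymptotic regime; I resolve this by a dyadic split into $\tau d\lesssim 1$ (estimated absolutely using $|b|\lesssim 1$ and the polynomial smallness $(\sin d)^{n-2}\lesssim d^{n-2}$) and $\tau d\gtrsim 1$ (where non-degenerate stationary phase with the improved decay of $b$ applies).
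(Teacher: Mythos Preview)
Your small-$\tau$ argument matches the paper's: both revert to the eigenfunction/Bessel expansion and control the series via small-argument Bessel bounds, Weyl asymptotics for $\nu_k$, and the $L^\infty$ eigenfunction bound.

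For large $\tau$ your route differs substantially from the paper's and contains a genuine gap. The paper does \emph{not} treat the cosine integral $I$ and the Poisson-wave integral $II$ separately. Instead it introduces a cutoff $\chi_\delta$ near $s=\pi$ (resp.\ $s=0$) and decomposes into $I_G$ (the cosine piece on $[0,\pi-\delta]$), $I_D$ (the Poisson-wave piece on $[\delta,\infty)$), and a mixed boundary piece $I_{GD}$. The key device for $I_{GD}$ is Proposition~\ref{prop:in-parts}: repeated integration by parts in $s$ transfers $2m$ derivatives onto the factors $e^{-iz\cos s}$ and $e^{iz\cosh s}$, at the price of $P^{-m}$ acting on the propagators; the boundary contributions at $s=\pi$ (from $\cos(s\sqrt P)$) and at $s=0$ (from $\sin(\pi\sqrt P)e^{-s\sqrt P}$) cancel exactly. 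One then applies the parametrices of Lemma~\ref{lemma: parametrix 2} with $m>0$, so the amplitudes are symbols of \emph{negative} order and the resulting oscillatory integrals are tame. Your approach bypasses this cancellation entirely.

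The specific claim that $II=O(\tau^{-N})$ is incorrect. At $s=0$ the phase $\tau\cosh s$ has a boundary critical point ($\partial_s=\tau\sinh s$ vanishes there), and the ``amplitude decay $e^{-s\rho}$'' you invoke equals $1$ at $s=0$ and provides nothing. Integration by parts in $\rho$ gains only the $\tau$-independent factor $(\pm d\pm\pi)^{-1}$, so arbitrary $\tau$-decay is unavailable. The best one can extract is $O(\tau^{-1/2})$ from the quadratic degeneracy $\cosh s-1\sim s^2/2$; this would still suffice (since $\tau^{-1/2}\le\tau^{(n-2)/2}$), but it is not the mechanism you describe, and obtaining it uniformly in $d$ and verifying convergence of the $s$-integral at infinity requires exactly the kind of analysis the paper's $I_D$/$I_{GD}$ split is designed to organize. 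Similarly, your small-$d$ treatment of $I$ (``$\tau d\lesssim 1$, estimated absolutely using $|b|\lesssim 1$'') cannot work as written: the absolute $\rho$-integral $\int_0^\infty |b(\rho d)|\rho^{n-2}\,d\rho$ diverges for every $d$, and the factor $(\sin d)^{n-2}$ you invoke arises only \emph{after} a stationary-phase reduction, not from a crude absolute bound.
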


As a consequence of Proposition \ref{prop:dispersive}, we have the localized decay estimates:
\begin{proposition}\label{prop:LqLq'} Let $\alpha$ and $p(\alpha)$ be \eqref{def:alpha} and \eqref{def:q-alpha} respectively. Then,  for $t\neq 0$, there exists a constant $C$ such that,
either if $\alpha\geq0$, 
\begin{equation}\label{est:LqLq'}
\big\| Q_j e^{it\LL_{{\A},a}} Q_j\big\|_{L^{p'}(\R^n)\to L^p(\R^n)}\leq C |t|^{-\frac n2(1-\frac2p)},\quad p\in [2, +\infty];
\end{equation}
or if $-\frac{n-2}2<\alpha <0$, 
\begin{equation}\label{est:LqLq'0}
\big\| Q_j e^{it\LL_{{\A},a}} Q_j\big\|_{L^{p'}(\R^n)\to L^p(\R^n)}\leq C |t|^{-\frac n2(1-\frac2p)},\quad p\in [2, p(\alpha)).
\end{equation}

\end{proposition}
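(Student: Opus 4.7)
The plan is to reduce to $t = 1$ by scaling invariance of $\LL_{{\A},a}$ and then to argue via interpolation between the dispersive bound of Proposition \ref{prop:dispersive} and the trivial $L^2 \to L^2$ bound from unitarity, treating the cases $\alpha \geq 0$ and $\alpha < 0$ separately. Since the cutoff $Q_j$ depends only on the angular variable, it commutes with dilation, and a direct change of variables gives $\|Q_j e^{it\LL_{{\A},a}} Q_j\|_{L^{p'} \to L^p} = |t|^{-\frac n2(1 - \frac 2 p)} \|Q_j e^{i\LL_{{\A},a}} Q_j\|_{L^{p'} \to L^p}$, so it suffices to prove uniform boundedness of $T := Q_j e^{i\LL_{{\A},a}} Q_j$ at $t = 1$.

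When $\alpha \geq 0$, the near-regime factor $(|x||y|)^\alpha \leq 1$ on $\{|x||y| \leq 1\}$, so Proposition \ref{prop:dispersive} yields $|K_T(x, y)| \leq C$ uniformly. Hence $\|T\|_{L^1 \to L^\infty} \leq C$, and Riesz-Thorin interpolation with $\|T\|_{L^2\to L^2} \leq 1$ delivers \eqref{est:LqLq'} for the full range $p \in [2, \infty]$.

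When $-\frac{n-2}{2} < \alpha < 0$, the bound $(|x||y|)^\alpha$ is singular at the origin and the $L^1 \to L^\infty$ route fails. Split $T = T_{\mathrm{near}} + T_{\mathrm{far}}$ according to whether $|x||y| \leq 1$. For $T_{\mathrm{near}}$, the key technical step is a dyadic decomposition: on annuli $A_k = \{|y| \sim 2^k\}$, H\"older's inequality gives $\int_{A_k} |y|^\alpha |f(y)| \,dy \leq C\, 2^{k(\alpha + n/p)} \|f\|_{L^{p'}(A_k)}$. Integrating the resulting pointwise bound for $T_{\mathrm{near}} f(x)$ over dyadic annuli in $x$ reduces the $L^p$-estimate to an $\ell^p$-norm of a discrete convolution of $F_k := \|f\|_{L^{p'}(A_k)}$ with a kernel of the form $2^{-j(\alpha + n/p)} \mathbf{1}_{j \geq 0}$. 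This discrete kernel lies in $\ell^1$ precisely when $\alpha + n/p > 0$, i.e.\ $p < p(\alpha)$; Young's inequality then controls the convolution, and the embedding $\ell^{p'} \hookrightarrow \ell^p$ for $p \geq 2$ together with $\|F\|_{\ell^{p'}} = \|f\|_{L^{p'}}$ (by disjoint supports) yields $\|T_{\mathrm{near}}\|_{L^{p'} \to L^p} \leq C$ for $p \in [2, p(\alpha))$.

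For $T_{\mathrm{far}}$, the uniform bound $|K_{\mathrm{far}}| \leq C$ gives $\|T_{\mathrm{far}}\|_{L^1 \to L^\infty} \leq C$; the triangle inequality combined with the $p = 2$ case of the $T_{\mathrm{near}}$ estimate (applicable since $|\alpha| < (n-2)/2 < n/2$ forces $p(\alpha) > 2$) and $\|T\|_{L^2 \to L^2} \leq 1$ gives $\|T_{\mathrm{far}}\|_{L^2 \to L^2} \leq C$. Riesz-Thorin then gives $\|T_{\mathrm{far}}\|_{L^{p'} \to L^p} \leq C$ for all $p \in [2, \infty]$, and combining with $T_{\mathrm{near}}$ yields the desired bound on $T$; rescaling in $t$ restores the decay factor $|t|^{-\frac{n}{2}(1-\frac{2}{p})}$. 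The main obstacle is the sharp dyadic summation for $T_{\mathrm{near}}$: the restriction $p < p(\alpha)$ is exactly what makes the discrete convolution kernel summable and the weight $|x|^\alpha$ locally $L^p$-integrable simultaneously, which also clarifies the necessity of this restriction, consistent with the sharpness asserted in Theorem \ref{thm:Strichartz'}.
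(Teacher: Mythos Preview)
Your argument is correct, and for $\alpha\geq 0$ it coincides with the paper's proof (interpolation between the $L^1\to L^\infty$ pointwise bound and $L^2\to L^2$ unitarity). For $\alpha<0$, however, your route differs genuinely from the paper's. The paper decomposes the propagator spectrally, via the angular-momentum projections $P_<$ and $P_\geq$ onto the (finitely many) modes with $\nu_k<(n-2)/2$ and their complement. On the high-momentum piece $Q_je^{it\LL_{{\A},a}}P_\geq Q_j$ the kernel is shown to obey the unweighted $|t|^{-n/2}$ bound, so interpolation applies as in the $\alpha\geq 0$ case; the low-momentum piece is treated mode by mode, dropping the $Q_j$'s, invoking the Littlewood--Paley square-function inequality (Proposition~\ref{prop:squarefun}) and then citing a single-mode $L^{q'}\to L^q$ estimate for the radial Hankel-type operator $T_\nu$ from \cite[Proposition~6.1]{JZ}. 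Your approach instead splits the kernel spatially into the near region $\{|x||y|\lesssim |t|\}$ and its complement, uses the factored pointwise bound $|K_{\mathrm{near}}|\lesssim |t|^{-n/2}(|x||y|/|t|)^\alpha$ and a dyadic Schur/Young argument to handle the near piece directly, and recovers the far piece by interpolation after bootstrapping its $L^2$ bound from $T-T_{\mathrm{near}}$. This is more elementary and entirely self-contained: it avoids both the square-function machinery of Section~\ref{sec:LP} and the external input from \cite{JZ}, at the cost of not isolating the spectral source of the singularity. The paper's approach, by contrast, makes transparent that the restriction $p<p(\alpha)$ stems solely from the finitely many low modes, which connects more directly to the counterexample in Theorem~\ref{thm:Strichartz'}.
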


The proofs of these two propositions are modified from our previous paper \cite{JZ}. We sketch the main ideas and steps here, and refer the reader to \cite[Section~4]{JZ} for the detailed arguments.

\begin{proof}[The sketch proof of Proposition \ref{prop:dispersive}]
As stated in Proposition \ref{prop:dispersive}, we consider two cases:  $\frac{r_1r_2}{2|t|}\lesssim 1$ and $\frac{r_1r_2}{2|t|}\gg 1$. The first case can be treated using the spectral theory and
properties of the spectrum. The second case is more challenging and will be addressed using the stationary phase method and the localized Hadamard parametrix.\vspace{0.1cm}

{\bf Case I: $\frac{r_1r_2}{2|t|}\lesssim 1$.} In this case, we could prove
\begin{equation*}
\begin{split}
\big| e^{it\LL_{{\A},a}}(x,y)\big|&=|t|^{-\frac n2}\Big(\frac{r_1r_2}{|t|}\Big)^{-\frac{n-2}2}\Big|\sum_{k\in\N}\psi_{k}(\hat{x})\overline{\psi_{k}(\hat{y})} J_{\nu_k}\Big(\frac{r_1r_2}{2t}\Big)\Big| \\
&\leq C |t|^{-\frac n2}\Big(\frac{r_1r_2}{|t|}\Big)^{-\frac{n-2}2+\nu_0}
\end{split}
\end{equation*}
by using the aforementioned Weyl's asymptotic formula \cite[Equation~(0.6)]{Garding53} and the asymptotic estimates of eigenfunction 
 \begin{equation}\label{est:eig}
\nu^2_k\sim (1+k)^{\frac 2{n-1}},\quad k\geq 1,\implies\|\psi_{k}(\hat{x})\|^2_{L^\infty(\mathbb{S}^{n-1})}\leq C(1+\nu^2_k)^{\frac{n-2}2}\leq C (1+k)^{\frac{n-2}{n-1}},
 \end{equation}
and the estimate of Bessel function
\begin{equation}\label{est:r}
|J_\nu(z)|\leq\frac{Cz^\nu}{2^\nu\Gamma(\nu+\frac{1}{2})\Gamma(\frac12)}\Big(1+\frac{1}{\nu+\frac12}\Big).
\end{equation}
We refer to \cite[Proposition 4.1]{JZ} for the details.\vspace{0.2cm}
  
{\bf Case II: $\frac{r_1r_2}{2|t|}\gg 1$.} In this case, by using Proposition \ref{prop:Sch-pro}, it suffices to prove 
\begin{equation}\label{est:>1}
\begin{split}
\Big|\Big(\frac{r_1 r_2}{|t|}\Big)^{-\frac{n-2}2}
 Q_j \Big(&\frac1{\pi}\int_0^\pi e^{\frac{r_1r_2}{2it} \cos(s)} \cos(s\sqrt{P})(\hat{x}, \hat{y}) ds\\
  &-\frac{\sin(\pi\sqrt{P})}{\pi}\int_0^\infty e^{-\frac{r_1r_2}{2it} \cosh s} e^{-s\sqrt{P}}(\hat{x}, \hat{y}) ds\Big)Q_j\Big| \leq C.
\end{split}
\end{equation}

Let $z:=\frac{r_1r_2}{2|t|}\gg 1$, we decompose the kernel into two terms, the propagation term:
\begin{equation}\label{S1}
\begin{split}
I_P(z;\hat{x}, \hat{y}):=\frac{z^{-\frac{n-2}2}}{\pi}\int_0^\pi  e^{-iz\cos(s)}  Q_j K_N(s,y_1,y_2) Q_j ds,
\end{split}
\end{equation}
and the residual term
\begin{equation}\label{S3}
\begin{split}
I_{R}(z;\hat{x}, \hat{y}):=& 
\frac{z^{-\frac{n-2}2}}{\pi} Q_j \big( \int_0^\pi  e^{-iz\cos(s)}  R_N(s,y_1,y_2) ds
\\& -\sin(\pi\sqrt{P})\int_0^\infty  e^{iz\cosh(s)} e^{-s\sqrt{P}} ds \big)Q_j.
\end{split}
\end{equation}
Here $P$ stands for propagation and $R$ stands for residual.

The estimate \eqref{est:>1} is proved if we could prove that both 
$I_{P}(z;\hat{x}, \hat{y})$ and $I_{R}(z;\hat{x}, \hat{y})$ are uniformly bounded when $z\gg1$.

\begin{proof}[The contribution of  $I_P$] 
Since we are localized to $0\leq s\leq \pi-\delta$ in $I_P(z; \hat{x}, \hat{y})$ and our oscillatory integral has exactly the same form as that in \cite[Lemma 4.3]{JZ}. So \cite[Lemma 4.3]{JZ} gives
\begin{equation}\label{est:>1-1}
\begin{split}
\big| I_P(z; \hat{x}, \hat{y})\big| \leq C
\end{split}
\end{equation}
since $\|Q_j\|_{L^\infty(\mathbb{S}^{n-1})} \leq 1$.

\end{proof}

\bigskip

\begin{proof}[The contribuiton from $I_R$] 
To estimate $I_{R}(z;\hat{x}, \hat{y})$, we split it into two terms
\begin{equation}\label{S3-1}
\begin{split}
I_{R,1}(z;\hat{x}, \hat{y}):=& 
\frac{z^{-\frac{n-2}2}}{\pi} Q_j \big( \int_0^\pi  e^{-iz\cos(s)}  R_N(s,y_1,y_2) ds
\\& -\sin(\pi\sqrt{P})\int_0^1  e^{iz\cosh(s)} e^{-s\sqrt{P}} ds \big)Q_j,
\end{split}
\end{equation}
and 
\begin{equation}\label{S3-2}
\begin{split}
I_{R,2}(z;\hat{x}, \hat{y}):= -
\frac{z^{-\frac{n-2}2}}{\pi} Q_j \Big(\sin(\pi\sqrt{P})\int_1^{+\infty}  e^{iz\cosh(s)} e^{-s\sqrt{P}} ds \Big)Q_j.
\end{split}
\end{equation}
For the first term, as we have shown in Lemma~\ref{lemma: parametrix 1} and Lemma~\ref{lemma: Poisson-wave}, this part has $C^{N-n-2}$-(hence uniformly bounded) kernel, and we have
\begin{equation*}
\begin{split}
\big| I_{R,1}(z;\hat{x}, \hat{y})\big|&\lesssim  
z^{-\frac{n-2}2}  \big( \int_0^\pi  |R_N(s,y_1,y_2)| ds
+\int_0^1  |R_N(s,y_1,y_2)| ds \big) \lesssim 1.
\end{split}
\end{equation*}
To estimate the second term, since $\|Q_j\|_{L^\infty(\mathbb{S}^{n-1})} \leq 1$, it is enough to show
\begin{equation*}
\begin{split}
z^{-\frac{n-2}2}\Big|\sum_{k\in\N}\psi_{k}(\hat{x})\overline{\psi_{k}(\hat{y})} \int_1^{+\infty}  e^{iz\cosh(s)} e^{-s\nu_k\pm i\pi\nu_k} ds \Big| \lesssim 1.
\end{split}
\end{equation*}
By \eqref{est:eig} and $z\gg1$, the left hand side of the above is bounded by
\begin{equation*}
\begin{split}
z^{-\frac{n-2}2}&\sum_{k\in\N}(1+\nu_k^2)^{\frac{n-2}2} \int_1^{+\infty}   e^{-s\nu_k} ds \\
&\lesssim z^{-\frac{n-2}2}\sum_{k\in\N}(1+\nu_k^2)^{\frac{n-2}2} e^{-\frac{\nu_k}2} \int_1^{+\infty}   e^{-\frac{s\nu_0}2} ds\lesssim_{\nu_0} 1.
\end{split}
\end{equation*}

\end{proof}

\end{proof}

\begin{proof}[The sketch proof of Proposition \ref{prop:LqLq'} ]
The proof is proceeds in the same manner as in our previous paper \cite[Section~6]{JZ}. By the spectral theorem, one has the $L^2$-estimate
\begin{equation}\label{est:L2q}
\|Q_j e^{it\LL_{{\A},a}}Q_j\|_{L^2(\R^n)\to L^2(\R^n)}
\leq \|e^{it\LL_{{\A},a}}\|_{L^2(\R^n)\to L^2(\R^n)}\leq C,
\end{equation}
which can be proved by using the unitary property of the Hankel transform. For instance, we can see this from the argument of \eqref{est:L2} below. 
So, if $\alpha\geq0$, we obtain \eqref{est:LqLq'} by interpolating \eqref{est:L2q} and 
$$\|Q_j e^{it\LL_{{\A},a}}Q_j\|_{L^1(\R^n) \to L^\infty (\R^n)} \leq C|t|^{-\frac n2},$$
which is a consequence of \eqref{est:dispersive} and \eqref{est:dispersive<}.\vspace{0.2cm}

If $\alpha<0$, one cannot obtain \eqref{est:LqLq'0}  by interpolation as above. To prove \eqref{est:LqLq'0}, we have to strengthen the result obtained from interpolation by getting rid of the weight when $p\in [2, p(\alpha))$. \vspace{0.2cm}

To this end, recalling \eqref{hk}, we first introduce the orthogonal projections on $L^{2}$
\begin{equation}\label{def-pro1}
  P_k:
  L^{2}(\R^n) \to L^{2}(r^{n-1}dr)\otimes  h_{k}(\mathbb{S}^{n-1}),
\end{equation}
and
\begin{equation}\label{def-pro}
  P_<:
  L^{2}(\R^n)\to  
  \bigoplus_{\{k\in \mathbb{N}: \nu_k< (n-2)/2\}}   L^{2}(r^{n-1}dr)\otimes  h_{k}(\mathbb{S}^{n-1}),
  \quad
  P_{\geq }=I-P_{<}.
\end{equation}
Here the space $h_{k}(\mathbb{S}^{n-1})$ is the linear
span of 
$\{\psi_k(\hat{x})\}$
defined in \eqref{equ:eig-Aa}. Then we can decompose the Schr\"odinger propagator as
\begin{equation}\label{d-propag}
e^{it\LL_{{\A},a}}f=
 e^{it\LL_{{\A},a}}P_{<}f+
 e^{it\LL_{{\A},a}}P_{\geq }f.
\end{equation}
From \eqref{funct-S}, we can write
\begin{equation}\label{ker:S-l}
\begin{split}
 e^{it\LL_{{\A},a}}P_{<}
&=\big(r_1 r_2\big)^{-\frac{n-2}2}\sum_{\{k\in\mathbb{N}: \nu_k<(n-2)/2\}}\psi_{k}(\hat{x})\overline{\psi_{k}(\hat{y})}K_{\nu_k}(t,r_1,r_2),
\end{split}
\end{equation}
and 
\begin{equation}\label{ker:S-h}
\begin{split}
 e^{it\LL_{{\A},a}}P_{\geq }
&=\big(r_1 r_2\big)^{-\frac{n-2}2}\sum_{\{k\in\mathbb{N}: \nu_k\geq \frac12(n-2)\}}\psi_{k}(\hat{x})\overline{\psi_{k}(\hat{y})}K_{\nu_k}(t,r_1,r_2).
\end{split}
\end{equation}
Since the kernel $ e^{it\LL_{{\A},a}}P_{\geq }$ only has contribution from large angular momenta, thus we can repeat the proof of Proposition \ref{prop:dispersive} to show 
$$\big| Q_j e^{it\LL_{{\A},a}}P_{\geq} Q_j\big|\leq C |t|^{-\frac n2}.$$
Therefore, similar to the case where $\alpha\geq 0$, we can prove \eqref{est:LqLq'0}  for $ Q_j e^{it\LL_{{\A},a}}P_{\geq } Q_j$ with $q\geq2$. Thus we are left to consider $ Q_j e^{it\LL_{{\A},a}}P_{<} Q_j$, where we are restricted to small angular momenta. In this small angular momenta case, we can drop $Q_j$ to prove global estimate instead:
\begin{equation}\label{est:LqLq'0s}
\big\| e^{it\LL_{{\A},a}} P_{<} \big\|_{L^{p'}(\R^n)\to L^p(\R^n)}\leq C |t|^{-\frac n2(1-\frac2p)},\quad p\in [2, p(\alpha)).
\end{equation}
Due to the aforementioned Weyl’s asymptotic formula \cite[Equation~(0.6)]{Garding53}, which implies
$$
\nu^2_k\sim (1+k)^{\frac 2{n-1}},\quad k\geq 1,
$$ 
the summation in the kernel $ e^{it\LL_{{\A},a}}P_{<}$ in \eqref{ker:S-l} has only finitely many terms. 
Hence, to prove \eqref{est:LqLq'0s} for  $ e^{it\LL_{{\A},a}}P_{<}$,
we only need to prove \eqref{est:LqLq'0s} for  $ e^{it\LL_{{\A},a}}P_{k}$ with each $k$ such that $\nu_k<(n-2)/2$. \vspace{0.1cm}

By using the Littlewood-Paley square function inequalities \eqref{square}
and the Minkowski inequalities, it suffices to show
  \begin{equation}\label{est:q-q'-m}
  \Big\|\varphi_j(\sqrt{\LL_{{\A},a}}) e^{it\LL_{{\A},a}}P_{k} f\Big\|_{L^p(\R^n)}\le
    C_k |t|^{-\frac n2(1-\frac 2p)} \Big\|\tilde{\varphi}_j(\sqrt{\LL_{{\A},a}}) P_{k} f\Big\|_{L^{p'}(\R^n)},
  \end{equation}
  provided $p\in [2, p(\alpha))$, where  we choose $\tilde{\varphi}\in C_c^\infty((0,+\infty))$ such that $\tilde{\varphi}(\lambda)=1$ if $\lambda\in\mathrm{supp}\,\varphi$
and $\tilde{\varphi}\varphi=\varphi$. In the following argument, since $\tilde{\varphi}$ shares the same properties as $\varphi$, we drop the tilde over $\varphi$ for brevity.

For $f\in L^2$, we expand 
\begin{equation}\label{f:exp}
\begin{split}
f=\sum_{k\in\mathbb{N}}c_{k}(r)\psi_k(\hat{x}),
\end{split}
\end{equation}
and let $\tilde{c}_k(r)=\varphi_j(\sqrt{\LL_{{\A},a}}) c_k(r)$, similarly to  \eqref{funct-S}, we write
  \begin{equation*}
    \begin{split}
\varphi_j(\sqrt{\LL_{{\A},a}}) e^{it\LL_{{\A},a}}P_{k} f&=\psi_k(\hat{x}) 2^{jn} \int_0^\infty  K^l_{\nu_k}(2^{2j}t;2^jr_{1}, 2^jr_{2}) \tilde{c}_k(r_2)\, r^{n-1}_2 dr_2\\
&=\psi_k(\hat{x}) \big(T_{\nu_k}\tilde{c}_k(2^{-j}r_2)\big)(2^{2j}t, 2^jr_1).
    \end{split}
  \end{equation*}
To estimate it, we recall \cite[Proposition 6.1]{JZ}:
\begin{lemma}\label{prop:est-qq'}
  Let $0 <\nu\leq \frac{n-2}2$ and $\sigma(\nu)=-(n-2)/2+\nu$. Let $T_\nu$ be the operator defined as
  \begin{equation}\label{Tnu-operator}
\begin{split}
(T_{\nu}g)(t,r_1)=\int_0^\infty  K^l_{\nu}(t;r_{1},r_{2}) g(r_2)\, r^{n-1}_2 dr_2
 \end{split}
\end{equation}
 and 
\begin{equation*}
\begin{split}
  K^l_{\nu}(t,r_1,r_2)&=(r_1r_2)^{-\frac{n-2}2}\int_0^\infty e^{it\rho^2}J_{\nu}(r_1\rho)J_{\nu}(r_2\rho) \varphi(\rho)\,\rho d\rho,
  \end{split}
\end{equation*}
where $\varphi$ is given in \eqref{LP-dp}.
Then, for $2\leq q<q(\sigma)$, the following estimate holds
  \begin{equation}\label{est:q-q'}
  \|T_{\nu}g\|_{L^q({r^{n-1}_1 dr_1})}\le
    C_{\nu}|t|^{-\frac n2(1-\frac 2q)}\|g\|_{L^{q'}_{r^{n-1}_2 dr_2}}.
  \end{equation}
\end{lemma}
Notice that $p(\alpha)\leq p(\sigma)$ for $\nu \geq \nu_0$ and $\sigma(\nu)=-(n-2)/2+\nu$, we use this proposition to obtain that 
  \begin{equation*}
  \begin{split}
  &\Big\|\varphi_j(\sqrt{\LL_{{\A},a}})  e^{it\LL_{{\A},a}}P_{k} f\Big\|_{L^p(\R^n)}\le
    C_k \|\big(T_{\nu_k}\tilde{c}_k(2^{-j}\cdot)\big)(2^{2j}t, 2^jr_1)\|_{L^p_{r^{n-1}_1 dr_1}} \\&\le C_{k}|t|^{-\frac n2(1-\frac 2p)}\|\tilde{c}_k(r)\|_{L^{p'}_{r^{n-1} dr}}\le C_k |t|^{-\frac n2(1-\frac 2p)} \Big\|\varphi_j(\sqrt{\LL_{{\A},a}})  P_{k} f\Big\|_{L^{p'}(\R^n)},
    \end{split}
  \end{equation*}
  which shows the desirable estimate \eqref{est:q-q'-m}. 
Therefore, we have completed the proof of Proposition \ref{prop:LqLq'}.
\end{proof}

\section{The proof of Theorem \ref{thm:Stri}} \label{sec: strichartz, H0 level}
In this section, we primarily prove Theorem  \ref{thm:Stri} using Proposition \ref{prop:LqLq'} and a  variant of the abstract Strichartz estimates from Keel-Tao's work \cite{KT}.\vspace{0.2cm}

Let $Q_j$ be defined in \eqref{Id-p-Q} and define 
\begin{equation}\label{def:Uj}
U_j(t)=Q_j e^{it\LL_{{\A},a}},
\end{equation}
then we see that
\begin{equation}\label{U_jsum}
U(t):=e^{it\LL_{{\A},a}}=\sum_{j=1}^{2^n} U_j(t).
\end{equation}

\subsection{Homogeneous Strichartz estimates}
The homogeneous Strichartz estimates \eqref{Str-est} is a direct consequence of 
\begin{equation}\label{Str-est'}
\|U_j(t) u_0\|_{L^q_tL^p_x(\mathbb{R}\times \R^n)}\leq
C\|u_0\|_{L^2(\R^n)},\quad j=1,\cdots, 2^n.
\end{equation}
We will use the $L^2$-estimates and the localized dispersive estimates to prove \eqref{Str-est'}.
To this end, we need a variant of Keel-Tao's abstract argument.

\begin{proposition}\label{prop:semi-1}
Let $(X,\mathcal{M},\mu)$ be a $\sigma$-finite measure space and
$S: \mathbb{R}\rightarrow B(L^2(X,\mathcal{M},\mu))$ be a 
measurable map satisfying that, for some constants $\sigma>0$, $2\leq p_0\leq +\infty$, there exists a constant $C$ such that
\begin{equation}\label{md-1}
\begin{split}
\|S(t)\|_{L^2\rightarrow L^2}&\leq C,\quad t\in \mathbb{R},\\
\|S(t)S(s)^*f\|_{L^{p_0}}&\leq
C|t-s|^{-\sigma(1-\frac2{p_0})}\|f\|_{L^{p_0'}},\quad 2\leq p_0\leq +\infty.
\end{split}
\end{equation}
Then for every pair $q,p\in[2,\infty]$ such that $(q, p,\sigma)\neq
(2,\infty,1)$ and
\begin{equation*}
\frac{1}{q}+\frac{\sigma}{p}=\frac\sigma 2,\quad 2\leq p\leq p_0, \quad q\geq 2,
\end{equation*}
there exists a constant $\tilde{C}$ depending only on $C$, $\sigma$,
$q$ and $p$ such that
\begin{equation*}
\Big(\int_{\mathbb{R}}\|S(t) u_0\|_{L^p(X)}^q dt\Big)^{\frac1q}\leq \tilde{C}
\|u_0\|_{L^2(X)}.
\end{equation*}
\end{proposition}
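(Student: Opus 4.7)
The plan is to follow the abstract Strichartz framework of Keel--Tao~\cite{KT}, adapted to the situation where the pointwise dispersive bound is available only at the $L^{p_0'} \to L^{p_0}$ level rather than at $L^1 \to L^\infty$. Setting $T u_0 := S(\cdot) u_0$, the desired bound is, by duality, equivalent to the $TT^*$-estimate
\begin{equation*}
\Big\| \int_{\mathbb{R}} S(t) S(s)^* F(s) \, ds \Big\|_{L^q_t L^p_x} \lesssim \|F\|_{L^{q'}_t L^{p'}_x},
\end{equation*}
so I would first reduce the problem to proving this bilinear bound.

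Next I would interpolate the two hypotheses in \eqref{md-1}: the $L^2$-boundedness gives $\|S(t) S(s)^*\|_{L^2 \to L^2} \lesssim 1$, while the dispersive estimate gives $\|S(t) S(s)^*\|_{L^{p_0'} \to L^{p_0}} \lesssim |t-s|^{-\sigma(1-2/p_0)}$. Complex interpolation then yields, for every $2 \leq p \leq p_0$,
\begin{equation*}
\|S(t) S(s)^* f\|_{L^p} \lesssim |t-s|^{-\sigma(1 - 2/p)} \|f\|_{L^{p'}}.
\end{equation*}
For the non-endpoint case $q > 2$, I would combine this with the Hardy--Littlewood--Sobolev inequality in the time variable and Minkowski's inequality; the admissibility condition $1/q + \sigma/p = \sigma/2$ is precisely the one that matches the HLS exponents. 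This disposes of all admissible pairs with $q > 2$, including the trivial endpoint $(q,p)=(\infty,2)$.

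For the endpoint $q = 2$ (permitted when $\sigma > 1$, with $p$ strictly below $p_0$ in our restricted range, since the forbidden triple $(q,p,\sigma)=(2,\infty,1)$ is excluded by hypothesis), HLS fails directly and I would invoke the bilinear dyadic scheme of Keel--Tao. Concretely, decompose $\int ds = \sum_{j \in \mathbb{Z}} \int_{2^j \leq |t-s| < 2^{j+1}}$ and establish bilinear bounds of the form
\begin{equation*}
\Big| \Big\langle \int_{|t-s| \sim 2^j} S(t) S(s)^* F(s) \, ds,\; G(t) \Big\rangle \Big| \lesssim 2^{-j \beta(a,b)} \|F\|_{L^{2}_t L^{a'}_x} \|G\|_{L^{2}_t L^{b'}_x}
\end{equation*}
for a small family of auxiliary pairs $(a,b)$ close to $(p,p)$, then recover the endpoint by a real (bilinear) interpolation and summation in $j$. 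The main technical obstacle is ensuring that every auxiliary exponent $a$, $b$ appearing in the Keel--Tao atomic decomposition stays inside the admissible range $[2, p_0]$ where we actually possess the interpolated dispersive estimate; this is exactly where the strict inequality $p < p_0$ at the endpoint is used, and also why the classical forbidden case $(2,\infty,1)$ must be ruled out. Once this range check is verified, the remainder of the Keel--Tao argument applies verbatim and yields the full claimed range of admissible pairs.
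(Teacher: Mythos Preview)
Your proposal is correct and matches the paper's approach: the paper's proof simply says ``repeat Keel--Tao's argument with the additional restriction $p\le p_0$,'' and what you have written is exactly a sketch of that argument (the $TT^*$ reduction, interpolated dispersive bound, Hardy--Littlewood--Sobolev for $q>2$, and the bilinear dyadic scheme for $q=2$). Your remark about needing room above $p$ inside $[2,p_0]$ for the endpoint bilinear interpolation is a fair technical caveat that the paper does not spell out; it is harmless here since in the application (proof of Theorem~\ref{thm:Stri}) one always has $p\le \frac{2n}{n-2}<p(\alpha)$, so $p_0$ can be chosen strictly larger than $p$.
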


\begin{proof} If $p_0=\infty$, this is precisely the Keel-Tao abstract Strichartz estimate from
\cite{KT}. One can repeat Keel-Tao's argument to prove this proposition with an additional restriction $p\leq p_0$ (since our condition is restricted to this range now).
\end{proof}

\begin{lemma}\label{lem:L2-est} Let $U_j(t)$ be defined in \eqref{def:Uj}, then 
\begin{equation}\label{est:L2}
\|U_j(t)u_0\|_{L^2(\R^n)}\lesssim \|u_0\|_{L^2(\R^n)}.
\end{equation}
\end{lemma}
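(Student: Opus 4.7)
The plan is to reduce the claim to the unitarity of $e^{it\LL_{{\A},a}}$ on $L^2(\R^n)$, and then exploit that $Q_j$ is simply multiplication by a bounded function on $\mathbb{S}^{n-1}$. First I would note that since $\{Q_j\}_{j=1}^4$ is a partition of unity on $\mathbb{S}^{n-1}$ with $Q_j\geq 0$ and $\sum_j Q_j=1$, we have $|Q_j(\hat{x})|\leq 1$ for all $\hat{x}\in\mathbb{S}^{n-1}$, hence $Q_j$ acts as a contraction on $L^2(\R^n)$:
\begin{equation*}
\|Q_j f\|_{L^2(\R^n)}\leq \|f\|_{L^2(\R^n)}.
\end{equation*}

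Next, I would establish $\|e^{it\LL_{{\A},a}}u_0\|_{L^2(\R^n)}=\|u_0\|_{L^2(\R^n)}$ via the functional calculus developed in Section~2, following the hint in the excerpt. Expanding $u_0$ in angular modes as in \eqref{sep.v},
\begin{equation*}
u_0(r,\hat{x})=\sum_{k\in\N} c_k(r)\,\psi_k(\hat{x}),
\end{equation*}
and using the orthonormality of $\{\psi_k\}$ in $L^2(\mathbb{S}^{n-1})$ along with the representation \eqref{funct} with $F(\rho^2)=e^{-it\rho^2}$, one obtains
\begin{equation*}
e^{it\LL_{{\A},a}}u_0(r,\hat{x})=\sum_{k\in\N}\psi_k(\hat{x})\,\mathcal{H}_{\nu_k}\!\bigl(e^{-it\rho^2}(\mathcal{H}_{\nu_k}c_k)(\rho)\bigr)(r).
\end{equation*}
The Plancherel identity for the Hankel transform $\mathcal{H}_{\nu_k}$ on $L^2(r^{n-1}dr)$ gives that multiplication by $e^{-it\rho^2}$ is an $L^2$-isometry, so that each angular component is preserved in $L^2(r^{n-1}dr)$, and then orthogonality of the spherical modes delivers
\begin{equation*}
\|e^{it\LL_{{\A},a}}u_0\|_{L^2(\R^n)}^2=\sum_{k\in\N}\|c_k\|_{L^2(r^{n-1}dr)}^2=\|u_0\|_{L^2(\R^n)}^2.
\end{equation*}

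Combining the two observations yields $\|U_j(t)u_0\|_{L^2(\R^n)}=\|Q_j e^{it\LL_{{\A},a}}u_0\|_{L^2(\R^n)}\leq \|e^{it\LL_{{\A},a}}u_0\|_{L^2(\R^n)}=\|u_0\|_{L^2(\R^n)}$, which is \eqref{est:L2}. There is no substantial obstacle: the only ingredients are the boundedness of the angular multiplier $Q_j$ and the Plancherel identity for the Hankel transform, both of which are already established in the paper. One could alternatively invoke self-adjointness of $\LL_{{\A},a}$ directly (as guaranteed by the strict positivity of $P_{{\A},a}$ and the Friedrichs extension), but the Hankel-transform argument is more in keeping with the explicit spectral framework used throughout Section~2.
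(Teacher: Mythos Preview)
Your proof is correct and follows essentially the same route as the paper: reduce to the boundedness of the angular multiplier $Q_j$ on $L^2$, then establish the $L^2$-isometry of $e^{it\LL_{{\A},a}}$ via the angular-mode expansion, the Hankel transform representation, and the unitarity of $\mathcal{H}_{\nu_k}$. The only cosmetic difference is a sign in the phase $e^{\mp it\rho^2}$, which is immaterial for the $L^2$ estimate.
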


\begin{proof} By the definition of $U_j(t)$ in \eqref{def:Uj} and $U(t)$ in \eqref{U_jsum}, since $Q_j$ is bounded from $L^2$ to $L^2$, it is easy to see
\begin{equation*}
\|U_j(t)u_0\|_{L^2(\R^n)}\lesssim \|U(t)u_0\|_{L^2(\R^n)}.
\end{equation*}
If $u_0\in L^2(\R^n)$, as \eqref{sep.v}, we expand it as $$u_0(x)=\sum_{k\in\N} c_k(r)\psi_k(\hat{x}).$$
Then by \eqref{funct-S} and \eqref{equ:knuk}, we obtain
\begin{equation*}
\begin{split}
 U(t)u_0&=\int_0^\infty \int_{\mathbb{S}^{n-1}} K(r_1,\hat{x},r_2,\hat{y}) u_0(r_2,\hat{y})\, r_2^{n-1} dr_2 d\hat{y}\\
 &=\sum_{k\in\N}\psi_k(\hat{x}) \mathcal{H}_{\nu_k}[e^{it\rho^2} (\mathcal{H}_{\nu_k} c_k)(\rho)],
 \end{split}
\end{equation*}
where $\mathcal{H}_{\nu}$ is the Hankel transform given in \eqref{hankel}. Therefore, we use the orthogonality of the eigenfunctions and the unitarity of the Hankel transform
$$\|\mathcal{H}_{\nu_k}f\|_{L^2(\rho^{n-1}d\rho)}=\|f\|_{L^2(r^{n-1}dr)}$$
to obtain \eqref{est:L2}.

\end{proof}

By the definition of \eqref{def:Uj} again,  we see that
\begin{equation*} 
U_j(t)U_j^*(s)=Q_j e^{i(t-s)\LL_{{\A},a}} Q_j 
\end{equation*}
due to that $Q_j^*=Q_j $. Hence we have the following decay estimates by Proposition
\ref{prop:LqLq'}
\begin{equation*}
\|U_j(t)U_j^*(s)f\|_{L^\infty}\lesssim |t-s|^{-n/2}\|f\|_{L^1}
\end{equation*}
provided $\alpha\geq 0$; while if $-(n-2)/2<\alpha<0$,
\begin{equation*}
\|U_j(t)U_j^*(s)f\|_{L^{p_0}}\lesssim |t-s|^{-\frac n2(1-\frac2{p_0})}\|f\|_{L^{p_0'}}, \quad p_0\in [2, p(\alpha)).
\end{equation*}
As a consequence of the Keel-Tao abstract Strichartz estimate in
Proposition \ref{prop:semi-1}, we obtain 
\begin{equation}\label{est:Uj}
\|U_j(t)u_0\|_{L^q(\mathbb{R}; L^p(\R^n))}\lesssim
\|u_0\|_{L^2(\R^n)},
\end{equation}
where $(q,p)\in\Lambda_0$ is sharp $\frac n2$-admissible, which is defined in \eqref{adm-p}. If $-(n-2)/2<\alpha<0$, we additionally require that
$2\leq p<p(\alpha)=-\frac n\alpha$, but when $(q,p)\in\Lambda_0$ one has $2\leq p\leq \frac{2n}{n-2}<-\frac n\alpha$ due to $\nu_0>0$. Therefore, we have proved 
\eqref{Str-est}.


\subsection{Inhomogeneous Strichartz estimates}\label{subsec:inhom}
In this subsection, we prove \eqref{eq:inhom}, except for the double endpoint $(q,r)= (\tilde q, \tilde r) = (2,\frac{2n}{n-2})$ for $n \geq 3$. Recall
$U(t)=e^{it\LL_{{\A},a}}: L^2\rightarrow L^2$. We have already established the homogenous Strichartz estimate \eqref{Str-est},
\begin{equation*}
\|U(t)u_0\|_{L^q_t(\R;L^p_x(\R^n))}\lesssim\|u_0\|_{L^2(\R^n)}
\end{equation*} which hold for all $(q,p)\in \Lambda_0$.
By a $TT^*$-type argument, the estimate is equivalent to
\begin{equation*}
\Big\|\int_{\R}{U}(t){U}^*(s)F(s)ds\Big\|_{L^q_t(\R;L^p_x(\R^n))}\lesssim\|F\|_{L^{\tilde{q}'}_t(\R;L^{\tilde{p}'}_x(\R^n))},
\end{equation*}
where both $(q,r)$ and $(\tilde{q},\tilde{r})$ belong to $\Lambda_0$.
By the Christ-Kiselev lemma \cite{CK}, we obtain for $q>\tilde{q}'$
\begin{equation}
\Big\|\int_{s<t}{U}(t){U}^*(s)F(s)ds\Big\|_{L^q_t(\R;L^p_x(\R^n))}\lesssim\|F\|_{L^{\tilde{q}'}_t(\R;L^{\tilde{p}'}_x(\R^n))}.
\end{equation}
Notice that $\tilde{q}'\leq 2 \leq q$, therefore we have proved \eqref{eq:inhom} except at the double endpoint
$(q,p)=(\tilde{q},\tilde{p})=(2,\frac{2n}{n-2})$. 

\section{The proof of Theorem \ref{thm:Strichartz'} } \label{sec: strichartz, Hs level}

In this section, we prove Theorem \ref{thm:Strichartz'} by using Theorem \ref{thm:Stri} and the Littlewood-Paley theory from Section \ref{sec:LP}, applied to the operator $\LL_{{\A},a}$.

\begin{proof}[The proof of Theorem \ref{thm:Strichartz'}.]  We begin by proving \eqref{Str-est-s} when $(q,p)$ satisfies \eqref{adm-p-s'}.
As noted in Remark \ref{rem:set}, we must have $s\in [0,1+\nu_0)$, otherwise  the set $\Lambda_{s,\nu_0}$ would be nonempty.
When $(q,p)\in \Lambda_{s,\nu_0}$, we have  $2\leq p<p(\alpha)$.
Thus, using \eqref{square}, \eqref{est:Bern} and Theorem \ref{thm:Stri}, we can estimate as follows. First, applying the Littlewood-Paley decomposition \eqref{square}, we obtain
\begin{equation}
\begin{split}
\|e^{it\LL_{{\A},a}}u_0\|^2_{L^q(\R;L^p(\R^n))}&\lesssim \sum_{j\in\Z}\big\|\varphi(2^{-j}\sqrt{\LL_{{\A},a}}) e^{it\LL_{{\A},a}}u_0\big\|^2_{L^q(\R;L^{p}(\R^n))}.
\end{split}
\end{equation}
Next, applying the Bernstein inequality \eqref{est:Bern}, the right hand side above is controlled by
\begin{equation}
\begin{split}
& \sum_{j\in\Z}2^{2nj(\frac1{\tilde{p}}-\frac1p)}\big\|e^{it\LL_{{\A},a}}\varphi(2^{-j}\sqrt{\LL_{{\A},a}}) u_0\big\|^2_{L^q(\R;L^{\tilde{p}}(\R^n))}.
\end{split}
\end{equation}
Now, using Theorem \ref{thm:Stri} and the definition of the Sobolev space from \eqref{Sobolev-n}, the quantity above is controlled by
\begin{equation}
\begin{split}
& \sum_{j\in\Z}2^{2js}\big\|\varphi(2^{-j}\sqrt{\LL_{{\A},a}}) u_0\big\|^2_{L^{2}(\R^n))}=\|u_0\big\|^2_{\dot H_{{\A},a}^{s}(\R^n))}.
\end{split}
\end{equation}
Here, we use
\begin{equation*}
s=n(1/\tilde{p}-1/p),\quad 2/q=n(1/2-1/\tilde{p}).
\end{equation*}
Therefore, we have proved \eqref{Str-est-s} for $(q,p)\in \Lambda_{s,\nu_0}$.\vspace{0.2cm}

To finish the proof of Theorem \ref{thm:Strichartz'}, we provide a counterexample demonstrating that \eqref{Str-est-s} fails when $p\geq p(\alpha)$.  Let $u_0(r)=(\mathcal{H}_{\nu_0}\chi)(r)$ be independent of $\hat{x}$, where $\chi\in\CC_c^\infty([1/2,1])$ has values in $[0,1]$
and $\mathcal{H}_{\nu_0}$ denotes the Hankel transform as defined in \eqref{hankel}. Due to the compact support of  $\chi$ and the
unitarity of $\mathcal{H}_{\nu_0}$ on $L^2$, we have $\|u_0\|_{\dot H^s}\leq C$. For this choice of $u_0$, we will show that 
\begin{equation*}
\begin{split} \|e^{it\LL_{{\A},a}}u_0\|_{L^q(\R;L^p(\R^n))}=\infty, \quad (q,p)\in\Lambda_s, \quad p\geq p(\alpha). \end{split}
\end{equation*}
Recalling the expression for $e^{it\LL_{{\A},a}}u_0$ from \eqref{funct-S}, we write 
\begin{equation*}
\begin{split} e^{it\LL_{{\A},a}}u_0&=\int_0^\infty(r\rho)^{-\frac{n-2}2}J_{\nu_0}(r\rho)e^{
it\rho^2}(\mathcal{H}_{\nu_0}u_0)(\rho)\rho^{n-1}d\rho,
\end{split}
\end{equation*}
which simplifies to 
\begin{equation*}
\begin{split} e^{it\LL_{{\A},a}}u_0=\int_0^\infty(r\rho)^{-\frac{n-2}2}J_{\nu_0}(r\rho)e^{
it\rho^2}\chi(\rho)\rho^{n-1}d\rho:=Z.
\end{split}
\end{equation*}
We aim to prove that:
  \begin{equation}\label{est:aim}
    \|Z\|_{L^q(\R;L^p(\R^n))} =\infty,
    \qquad p\geq p(\alpha).
  \end{equation}
  From the series expansion of $J_{\nu_0}(r)$ at $r=0$,
  we have
  \begin{equation}\label{Bessel1}
    J_{\nu_0}(r)=C_{\alpha}r^{\nu_0}+S_{\nu_0}(r)
  \end{equation}
  where
  \begin{equation}\label{Bessel3}
    |S_{\nu_0}(r)|\leq C_{\nu_0} r^{1-\nu_0},
    \qquad
    r\in(0,2].
  \end{equation}
  Then, for any $0<\epsilon<1$, we  estimate $Z$ as follows:
  \begin{equation*}
  \begin{split}
    \|Z\|_{L^q(\R;L^p(\R^n))}&\ge
    \|Z\|_{L^q_{t}([0,1/2];L^p_{r^{n-1}dr}[\epsilon,1])}\\
    &\ge
    C_{\alpha}\|P\|_{L^q_{t}([0,1/4];L^p_{r^{n-1}dr}[\epsilon,1])}
    -
    \|Q\|_{L^q_{t}([0,1/2];L^p_{r^{n-1}dr}[\epsilon,1])},
    \end{split}
  \end{equation*}
  where $\alpha=\nu_0-(n-2)/2$ and
  \begin{equation*}
    P= \int_{0}^{\infty}(r \rho)^{\alpha}e^{it \rho^2}\chi(\rho)
      \rho^{n-1} d \rho,
    \qquad
    Q=\int_{0}^{\infty}(\rho r)^{-\frac{n-2}2}S_{\nu_0}(\rho)e^{it \rho^2}\chi(\rho)
      \rho^{n-1} d \rho.
  \end{equation*}
  Now, on one hand by \eqref{Bessel3} and the fact that $2\leq p\leq \frac{2n}{n-2}$, we have
  \begin{equation*}
  \begin{split}
    \|Q\|_{L^q_{t}([0,1/2];L^p_{r^{n-1}dr}[\epsilon,1])}
   & \lesssim
    \left\|
      \int_{0}^{\infty}(r \rho)^{1+\alpha}\chi(\rho)\rho^{n-1} d \rho
    \right\|_{L^q_{t}([0,1/2];L^p_{r^{n-1}dr}[\epsilon,1])}\\
   & \lesssim \max\big\{\epsilon^{\alpha+1+\frac np},1\big\}\lesssim 1.
    \end{split}
  \end{equation*}
  On the other hand, we have
  \begin{equation*}
    \|P\|_{L^q([0,1/4];L^p_{r^{n-1}dr}[\epsilon,1])}=
    \left(\int_0^{\frac14}\left( \int_{\epsilon}^1 \left|\int_0^\infty (r\rho)^{\alpha}e^{
    it\rho^2}\chi(\rho)\rho^{n-1} d\rho\right|^p r^{n-1} dr\right)^{q/p}dt\right)^{1/q}
  \end{equation*}
  and by the assumption $p\geq p(\alpha)=-\frac n\alpha$
  \begin{equation*}
  \begin{split}
    &\gtrsim
    \left(\int_0^{\frac14}
    \left|\int_0^\infty \rho^{\alpha}
    e^{it\rho^2}\chi(\rho)\rho^{n-1} d\rho\right|^{p}dt\right)^{1/p}
    \times 
    \begin{cases}
      \epsilon^{\alpha+\frac np} &
      \text{if}\quad n+p\alpha<0\\
      \ln\epsilon &
      \text{if}\quad p\alpha+n=0
    \end{cases}\\
   & \gtrsim C
    \begin{cases}
      \epsilon^{\alpha+\frac np} &
     \text{if}\quad p>p(\alpha),\\
      \ln\epsilon &
        \text{if}\quad p=p(\alpha).    \end{cases}
    \end{split}
  \end{equation*}
  In the last inequality, we have used the fact that 
  $\cos(\rho^2 t)\geq 1/100$ for $t\in [0, 1/4]$ and 
  $\rho\in [1,2]$, so that
  \begin{equation}
  \begin{split}
  \left|\int_0^\infty \rho^{\alpha}e^{
  it\rho^2}\chi(\rho)\rho^{n-1} d\rho\right|\geq \frac1{100}\int_0^\infty \rho^{\alpha}\chi(\rho)\rho^{n-1} d\rho\geq c.
  \end{split}
  \end{equation}
  We now conclude the following:
  
  $\bullet$ If $p>p(\alpha)=-\frac n\alpha$, we obtain 
  \begin{equation*}
     \|Z\|_{L^q(\R;L^p(\R^n))} \geq
      c\epsilon^{\alpha+\frac np} -C 
      \to +\infty \qquad \text{as}\quad \epsilon\to 0;
  \end{equation*}
  
  $\bullet$ If $p=p(\alpha)$ we have
  \begin{equation*}
        \|Z\|_{L^q(\R;L^p(\R^n))} \geq   c\ln\epsilon-C\to +\infty \qquad \text{as}\quad \epsilon\to 0.
  \end{equation*}
This proves that $\|Z\|_{L^q(\R;L^p(\R^n))}=+\infty$ for $p\geq p(\alpha)$, which implies  \eqref{est:aim}. Thus, we have shown that \eqref{Str-est-s} fails when $p\geq p(\alpha)$, completing the proof of Theorem \ref{thm:Strichartz'}.

\end{proof}

\begin{center}

\end{center}


\begin{thebibliography}{99}


\bibitem{AB59} Y. Aharonov and D. Bohm, Significance of electromagnetic potentials in the quantum theory, Phys. Rev., 115(1959), 485-491.

\bibitem{AHS1}  J. Avron, I. Herbst and B. Simon, Schr\"odinger operators with magnetic fields, I. General interactions, Duke Math. J. 45 (1978), 847-883.

\bibitem{AHS2}  J. Avron, I. Herbst and B. Simon, Schr\"odinger operators with magnetic fields, II. Separation of center of mass in homogeneous magnetic fields, Ann. Phys. 114 (1978), 431-451.

\bibitem{AHS3}  J. Avron, I. Herbst and B. Simon, Schr\"odinger operators in magnetic fields, III. Atoms in homogeneous magnetic field, Comm. Math. Phys. 79 (1981), 529-572.

\bibitem{BG} M. Beceanu, M. Goldberg, Dispersive estimates for the Schr\"odinger equation with scaling-critical potential, Comm. Math. Phys. 314(2012), 471–481.

\bibitem{BK} M. Beceanu, H.-K. Kwon, Decay estimates for Schr\"odinger's equation with magnetic potentials in three dimensions, arXiv:2411.11787.

\bibitem{BPSS}
 N. Burq, F. Planchon, J. Stalker, and A. S.
Tahvildar-Zadeh, Strichartz estimates for the wave and Schr\"odinger
equations with the inverse-square potential, J. Funct. Anal., 203
(2003), 519-549.

\bibitem{BPST}
 N. Burq, F. Planchon, J. G. Stalker, and A. S. Tahvildar-Zadeh,
Strichartz estimates for the wave and Schr\"odinger equations with potentials of critical decay,
Indiana Univ. Math. J.,  53(2004), 1665-1680.


\bibitem{CK}
 M. Christ, and A. Kiselev,
Maximal functions
associated to filtrations, J. Funct. Anal., 179(2001),
409-425.

\bibitem{CS}
 S. Cuccagna, and Schirmer,
    On the wave equation with a magnetic potential,
    Comm. Pure Appl. Math., 54(2001), 135-152.

\bibitem{CT1}
 J. Cheeger, M. Taylor, Diffraction of waves by Conical Singularities parts I, Comm.
Pure Appl. Math., 35(1982), 275-331.

\bibitem{CT2}
 J. Cheeger, M. Taylor, Diffraction of waves by Conical Singularities parts  II, Comm.
Pure Appl. Math., 35(1982),  487-529.


\bibitem{DF}
 P. D'Ancona, and L. Fanelli,
Decay estimates for the wave and Dirac equations with a magnetic potential,
Comm. Pure Appl. Math., 60(2007), 357-392.

\bibitem{DFVV}
 P. D'Ancona, L. Fanelli, L. Vega, and N. Visciglia,
Endpoint Strichartz estimates for the magnetic Schr\"odinger equation,
J. Funct. Anal.,  258(2010), 3227-3240.


\bibitem{ES1949} 
W. Ehrenberg, R.E. Siday,
The refractive index in electron optics and the principles of dynamics,
Proceedings of the Physical Society. Section B, 62(1949), 8-21.

\bibitem{EGS1}
 M.B. Erdogan, M. Goldberg and W. Schlag,
Strichartz and Smoothing Estimates for Schr\"odinger
Operators with Almost Critical Magnetic Potentials in Three and Higher
Dimensions,
Forum Math., 21(2009), 687-722.

\bibitem{EGS2}
 M.B. Erdogan, M. Goldberg and W. Schlag,
Strichartz and smoothing estimates for Schr\"odinger operators with
large magnetic potentials in $\R^3$, J. European Math.
Soc.,  10(2008), 507-531.

\bibitem{Fanelli}
 L. Fanelli, Spherical Schr\"odinger Hamiltonians: Spectral Analysis and Time Decay,
A. Michelangeli, G. Dell’Antonio (eds.), Advances in Quantum Mechanics,
Springer INdAM Series 18, DOI 10.1007/978-3-319-58904-6\_8.


\bibitem{FFFP1}
 L. Fanelli, V. Felli, M. A. Fontelos, and A. Primo, Time decay of scaling critical electromagnetic Schr\"odinger flows,
Comm. Math. Phys., 324(2013), 1033-1067.

\bibitem{FFFP}
 L. Fanelli, V. Felli, M. A. Fontelos, and A. Primo, Time decay of scaling invariant electromagnetic Schr\"odinger equations on the plane,
Comm. Math. Phys., 337(2015), 1515-1533.


\bibitem{FV}
 L. Fanelli,  and L. Vega, Magnetic virial identities, weak dispersion and Strichartz inequalities,
Math. Ann. 344(2009), 249-278.


\bibitem{FFT} V. Felli, A. Ferrero, S. Terracini,  Asymptotic behavior of solutions to Schr\"odinger equations near an
isolated singularity of the electromagnetic potential. J. Eur. Math. Soc. 13(2011), 119-174.

\bibitem{GYZZ} Gao X, Yin Z, Zhang J, Zheng J. Decay and Strichartz estimates in critical electromagnetic fields. Journal of Functional Analysis. 2022;282(5).


\bibitem{FG}
 L. Fanelli and A. Garc\'ia, Counterexamples to Strichartz estimates for the magnetic
Schr\"odinger equation, Comm. Cont. Math., 13(2011), 213-234.

\bibitem{FZZ} Fanelli L, Zhang J, Zheng J. Dispersive estimates for 2D-wave equations with critical potentials. Advances in Mathematics. 2022;400.

\bibitem{Garding53} L. G\r{a}rding On the asymptotic distribution of the eigenvalues and eigenfunctions of elliptic differential operators.
Mathematica Scandinavica. 1953, 237-255.


\bibitem{GV} J. Ginibre and G. Velo, The global Cauchy problem for the nonlinear Schr\"odinger equation revisited, Ann. Inst. H. Poincar\'e Anal. Non Lin\'eaire, 2(1985), 309-327.


\bibitem{GH} C. Guillarmou, and A. Hassell, Uniform Sobolev estimates for
non-trapping metrics, J. Inst. Math. Jussieu, 13(3) (2014), 599--632.

\bibitem{GHS1} C. Guillarmou, A. Hassell and A. Sikora, Resolvent at low
energy III: the spectral measure, Trans. Amer. Math. Soc.,
365(2013), 6103-6148.

\bibitem{GHS2} C. Guillarmou, A. Hassell and A. Sikora, Restriction and
spectral multiplier theorems on asymptotically conic manifolds,
Analysis and PDE, 6(2013), 893-950.


\bibitem{HorVol3}  L. H\"ormander, The analysis of linear partial differential operators III: pseudo-differential operators. Springer Berlin-Heidelberg 2009.


\bibitem{HV1}  A. Hassell and A. Vasy, The spectral projections and the resolvent for scattering metrics, J. d'Analyse Math. 79(1999), 241-298.

\bibitem{KMVZZ}  R. Killip, C. Miao, M. Visan, J. Zhang, J. Zheng, Sobolev spaces adapted to the Schr\"odinger operator
with inverse-square potential. Math. Z. 288(2018), 1273–1298.

\bibitem{HV2}   A. Hassell and A. Vasy, The resolvent for Laplace-type operators on asymptotically conic spaces, Ann. Inst. Fourier (Grenoble), 51(2001), 1299-1346.

\bibitem{HZ} A. Hassell and J. Zhang, Global-in-time Strichartz estimates on nontrapping asymptotically conic manifolds,  Analysis \& PDE, 9(2016), 151-192.

\bibitem{FIO1} L. H\"ormander, Fourier integral operators I, Acta. Math.,  127(1971), 79-183.

\bibitem{HZ23}   X. Huang and J. Zhang, Heat kernel estimate in a conical singular space,  J. Geometry  Anal., 33(2023), 284.


\bibitem{JZ} Q. Jia and J. Zhang, Pointwise dispersive estimates for Schr\"odinger and wave  equations in a conical singular space, Preprint, arXiv:2411.16029.


\bibitem{KT}
 M. Keel and T. Tao, Endpoint Strichartz estimates,
Amer. J. Math., 120(1998), 955-980.



\bibitem{RS}
M. Reed, and B. Simon, Methods of modern mathematical physics. II. Fourier analysis, self-adjointness. Academic Press, New York-London, 1975.

\bibitem{SchlagSurvey}  {W. Schlag}, On pointwise decay of waves. J. Math. Phys. 62(2021), 061509.

\bibitem{GoldbergVisan} {Goldberg, Michael and Visan, Monica},
  A counterexample to dispersive estimates for {S}chr\"odinger
              operators in higher dimensions.  {Comm. Math. Phys.}
266(2006) {211--238}.

\bibitem{ErdoganGreen} {Erdo\u gan, M. Burak and Green, William R.},
Dispersive estimates for the {S}chr\"odinger equation for
              {$C^{\frac{n-3}{2}}$} potentials in odd dimensions. 
{Int. Math. Res. Not. IMRN} 13(2010), 2532--2565.

\bibitem{S}  W. Schlag, Dispersive estimates for Schr\"odinger operators: a survey. Mathematical aspects of nonlinear dispersive equations, 255-285, Ann. of Math. Stud. 163, Princeton Univ. Press, Princeton, NJ, 2007.

\bibitem{Simon} B. Simon, Schr\"odinger semigroups, Bull. Amer. Math. Soc. 7 (1982), no. 3, 447-526.


\bibitem{Sogge-H} {\sc C. D. Sogge},  Hangzhou lectures on eigenfunctions of the Laplacian. Annals of Mathematics Studies, 188. Princeton University Press, Princeton, NJ, 2014. 


\bibitem{Taylor}
M. Taylor, Partial Differential Equations, Vol II,
Springer, 1996.

\bibitem{Watson}  G. N. Watson, A Treatise on the Theory of Bessel Functions. Second Edition, Cambridge
University Press, 1944.

\bibitem{Yang1}  M. Yang, Diffraction of the Aharonov–Bohm Hamiltonian,   Annales Henri Poincar\'e,  22(2021),3619–3640.

\bibitem{Yang2} M. Yang, The wave trace and resonances of the magnetic Hamiltonian with singular vector potentials,  Comm. Math. Phys. 389(2022), 1099–1133.

\end{thebibliography}
\end{document}